\DeclareMathAlphabet{\mathpzc}{OT1}{pzc}{m}{it}
\def\@seccntformat#1{%
	\protect\textup{\protect\@secnumfont
		\ifnum\pdfstrcmp{subsection}{#1}=0 \bfseries\fi
		\csname the#1\endcsname
		\protect\@secnumpunct\hspace{-.65em}
	}%
}  
\g@addto@macro \normalsize {
	\setlength\abovedisplayskip{2pt plus 0pt minus 2pt}
	\setlength\belowdisplayskip{2pt plus 0pt minus 2pt}
}
\newtheorem{theorem}{Theorem}[section] 
\newtheorem{proposition}[theorem]{Proposition}
\newtheorem{corollary}[theorem]{Corollary}
\newtheorem*{theorem*}{Theorem}
\newtheorem*{lemma*}{Lemma}
\newtheorem*{proposition*}{Proposition}
\newtheorem*{corollary*}{Corollary}
\theoremstyle{definition}
\newtheorem{remark}[theorem]{Remark}
\newtheorem*{definition*}{Definition}
\newtheorem*{remark*}{Remark}
\newtheorem*{remarks*}{Remarks}
\newtheorem*{example*}{Example}
\newtheorem*{examples*}{Examples}
\renewenvironment{proof}{\noindent \textbf{Proof}.}{\qed \vskip5pt}
\renewcommand\emptyset{\varnothing}
\renewcommand\geq{\geqslant}
\renewcommand\leq{\leqslant}
\renewcommand\:{\colon}
\renewcommand\bar[1]{\overline{#1}}
\newcommand{\R}{\mathds{R}}
\newcommand{\Z}{\mathds{Z}}
\newcommand{\ckh}{\mathcal{C}\textnormal{Kh}}
\newcommand{\ckj}{\mathcal{C}\textnormal{KJ}}
\newcommand{\kh}{\textnormal{Kh}}
\newcommand{\kj}{\textnormal{KJ}}
\newcommand{\crossing}{\raisebox{-.2\height}{\includegraphics[scale=.2]{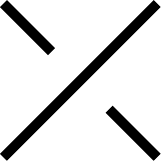}} \null}
\newcommand{\zsmooth}{\raisebox{-.2\height}{\includegraphics[scale=.2]{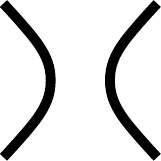}} \null}
\newcommand{\osmooth}{\raisebox{-.2\height}{\includegraphics[scale=.2]{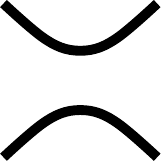}} \null}
\title{Relative Khovanov-Jacobsson Classes}
\author{Isaac Sundberg \& Jonah Swann}
\begin{document}

%
%

\begin{abstract}
	To a smooth, compact, oriented, properly-embedded surface in the $4$-ball, we define an invariant of its boundary-preserving isotopy class from the Khovanov homology of its boundary link. Previous work showed that when the boundary link is empty, this invariant is determined by the genus of the surface. We show that this relative invariant: can obstruct sliceness of knots; detects a pair of slices for $9_{46}$; is not hindered by detecting connected sums with knotted $2$-spheres.
\end{abstract}

\maketitle

\setcounter{tocdepth}{3}

%
%

\begin{section}{{Introduction}}
	
	Conjectured within the framework of \cite{khovanov00}, which defined what would become known as Khovanov homology, was an application of the map on Khovanov homology induced by an oriented link cobordism. More precisely, to an oriented link cobordisms $\Sigma\: L_0 \to L_1$, Khovanov constructed a diagrammatically-defined homomorphism $\kh(\Sigma)\: \kh(L_0) \to \kh(L_1)$ and conjectured that, up to sign, it is invariant under isotopy of $\Sigma$ rel $\partial \Sigma$. Specifically, Khovanov noted that if this conjecture held, then the following application might warrant further study: for a closed surface $S\: \emptyset \to \emptyset$, the induced map is an endomorphism $\kh(S)\: \Z \to \Z$, whereby the integer $n_S := |\kh(S)(1)| \in \Z$ determining $\kh(S)$ is an invariant of the ambient isotopy class of $S$. The conjecture was eventually proven by Jacobsson \cite{jacobsson04}, and then later in \cite{barnatan05} and \cite{khovanov06} through independent methods. The integer $n_S$ became known as the \textit{Khovanov-Jacobsson number} of the surface $S$, and calculations of $n_S$ for certain families of surface knots were established \cite{cartersaitosatoh04}. In the end, Rasmussen \cite{rasmussen05} and Tanaka \cite{tanaka05} independently proved Khovanov-Jacobsson numbers are trivially determined by the genus of $S$.
	
	In this paper, we define a relative version of Khovanov-Jacobsson numbers associated to link cobordisms $\Sigma\:\emptyset \to L$. Namely, the \textit{relative Khovanov-Jacobsson class} of $\Sigma$ is the class
		$$\kj_\Sigma = |\kh(\Sigma)(1)| \in \kh(L).$$
	It is an invariant of the isotopy class of $\Sigma$ rel $\partial \Sigma$. When $L = \emptyset$, we use \textit{absolute} in place of relative. A definition of this invariant, and a discussion of its diagram dependence, is given in Section \ref{rkjc} \!.
	
	The remainder of the paper studies and implements properties of relative Khovanov-Jacobsson classes. In Section \ref{crkjc} \!, we characterize the relative classes associated to surfaces bounding the unlink and surfaces obtained by Seifert's algorithm. In Section \ref{osk} \!, we show these classes can obstruct sliceness of knots. We illustrate this on a class of $3$-stranded pretzel knots.
	
	The paper concludes by discussing boundary-preserving isotopy classes of surfaces properly-embedded in the $4$-ball. Such classes have gained recent attention in \cite{juhaszzemke20}, \cite{conwaypowell20}, and \cite{millerpowell20}, where a refined characterization has been suggested. Connect summing a surface with a knotted $2$-sphere, or \textit{locally knotting} a surface, generally changes the boundary-preserving isotopy class of the surface. By considering surfaces up to both boundary-preserving isotopy and local knotting, we create a more nuanced distinction between surfaces, better capturing the boundary-preserving isotopy being established. In Section \ref{obpic} \!, we show that relative Khovanov-Jacobsson classes do not detect local knottedness, implying they are an invariant of these refined isotopy classes of surfaces. We conclude by showing relative Khvanov-Jacobsson classes detect a pair of slices for $9_{46}$ and generalize this to arbitrary connect sums of $9_{46}$.
	\vspace{10pt}
	
\end{section}

%
%

\begin{section}{{Background on Khovanov Homology}} \label{bkh}
	
	In this section, we provide the notation and jargon we use when discussing the Khovanov functor. We assume familiarity with the construction of the Khovanov chain complex, see \cite{khovanov00} or \cite{barnatan02} among many others. Sections 2.2 is based on the terminology from \cite{elliott09_2}, and Section 2.3 is based on the work from \cite{jacobsson04}.
	
	\subsection{{Khovanov Homology}} \label{bkh_homology}
	To each diagram $D$ of an oriented link $L$ and each enumeration of its crossings, we associate a bigraded chain complex $\ckh(D)$ called the \textit{Khovanov chain complex} of $D$. The homology $\kh(D)$ is called the \textit{Khovanov homology} of $D$. This definition is independent, up to chain homotopy equivalence, of both the chosen enumeration of crossings in $D$ and the chosen diagram $D$. In particular, associated to each Reidemeister move is a pseudoisomorphism between Khovanov chain complexes, discussed briefly in Section \ref{bkh_maps} \!. Thus, the chain homotopy class $\ckh(L)$ of $\ckh(D)$ is an invariant of the link $L$ called the \textit{Khovanov chain complex} of $L$; the isomorphism class $\kh(L)$ of $\kh(D)$ is called the \textit{Khovanov homology} of $L$.
	
	\subsection{{Elements of the Khovanov chain complex}} \label{bkh_elements}
	Here, we provide language used on elements of the Khovanov chain complex. Let $D$ be a diagram for an oriented link $L$ having $n$ crossings, of which there are $n_+$ positive crossings and $n_-$ negative crossings. Enumerate the crossings of $D$.
	
	A crossing $\crossing$ of $D$ can be \textit{smoothed} as either a $0$-\textit{smoothing} $\zsmooth$ or a $1$-\textit{smoothing} $\osmooth$. 
	
	A \textit{state} $\sigma$ of $D$ is a planar $1$-manifold obtained by replacing each crossing in $D$ with either a $0$-smoothing or $1$-smoothing. A diagram with $n$-many crossings will have $2^n$-many states. A state can be represented as a binary sequence $\sigma = (v_1, \dots, v_n)$, where the $i$th crossing is $v_i$-smoothed. The \textit{height} of a smoothing is $|\sigma| = \sum v_i$. The state whose crossings are all $0$-smoothed (or $1$-smoothed) is called the $0$-\textit{state} (or $1$-\textit{state}).
	
	A \textit{trace} is an indicator placed on a smoothed crossing to record the type of smoothing: a $0$-\textit{trace} is a red dot-dashed band indicating a $0$-smoothing; a $1$-\textit{trace} is a blue dotted band indicating a $1$-smoothing. A \textit{trace state} is a state whose smoothings are labeled with the appropriate traces.
	
	An \textit{enhanced state} $\alpha_\sigma$ is a state $\sigma$ whose components are labeled with either a $1$ or an $x$. A state with $m$ components has $2^m$ associated enhanced states. We record the number $v_+(\alpha)$ of $1$-labels and the number $v_-(\alpha)$ of $x$-labels. The enhanced state whose loops are all $1$-labeled is called the \textit{all} $1$-\textit{label} and is denoted $\sigma_1$; the enhanced state whose loops are all $x$-labeled is called the \textit{all} $x$-\textit{label} and is denoted $\sigma_x$. An \textit{enhanced trace state} is both an enhanced state and a trace state. 
	
	We define a bigrading $\ckh^{h,q}(D)$ on enhanced states having \textit{homological} and \textit{quantum} gradings:
	\begin{align*}
		h(\alpha_\sigma) &= |\sigma| - n_- \\
		q(\alpha_\sigma) &= v_+(\alpha_\sigma) - v_-(\alpha_\sigma) + h(\alpha_\sigma) + n_+ - n_-
	\end{align*}
	Each chain group $\ckh^{h,q}(D)$ is generated by the possible enhanced states from this bigrading\!
		\footnote{Formally, the generators of $\ckh(D)$ are algebraic objects which can be viewed pictorially as enhanced states.}
	\!.
	
	\begin{figure}[!ht]
		\includegraphics[scale=.3]{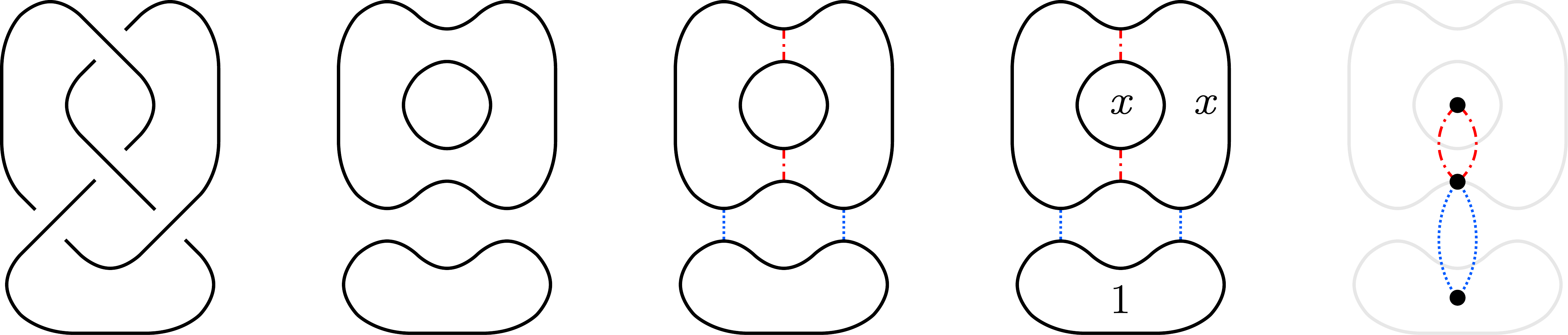}
		\caption{From left to right: the figure $8$ knot; a state $\sigma$ of the figure $8$ knot; the trace state of $\sigma$; one of the $2^3$ enhanced trace states on $\sigma$; the trace state graph $\Gamma_\sigma$.}
		\label{fig_trace}
	\end{figure}
	
	Associated to a trace state $\sigma$ is a \textit{trace state graph} $\Gamma_\sigma$ whose vertices are the components of $\sigma$ and whose edges are the traces. When illustrating $\Gamma_\sigma$, edges can be drawn using bands that mirror the trace they represent. Moreover, we often show $\sigma$ as a shadow of the graph, as in Figure \ref{fig_trace} \!. We interchangeably refer to the loops (or traces) in $\sigma$ and the vertices (or edges) in $\Gamma_\sigma$. We will have occasion to refer to certain subgraphs of the state graph.	Let $\Gamma_0$ denote the subgraph whose edges consist of all $0$-traces, and let  $\Gamma_1$ be the subgraph of $1$-traces.
	
	One chain element that we encounter regularly in this paper is called a $pqr$-chain, which we have illustrated in Figure \ref{fig_pqr} \!. The left side of the figure shows the shorthand we use to describe such chains. In this shorthand, a loop $\gamma$ labeled with a letter of the alphabet (usually beginning with $p$ and proceeding alphabetically) corresponds to a summand in the $pqr$-chain where $\gamma$ is $1$-labeled and all other labeled loops are $x$-labeled. If the letter on $\gamma$ is capitalized, the summand has a negative sign. A loop that is unlabeled in the shorthand is $1$-labeled in each summand. Using $pqr$-chains significantly reduces the number of enhanced states that must be illustrated in certain chain elements. We will make particularly good use of this in the upcoming sections.
	
	\begin{figure}[!ht]
		\includegraphics[scale=.3]{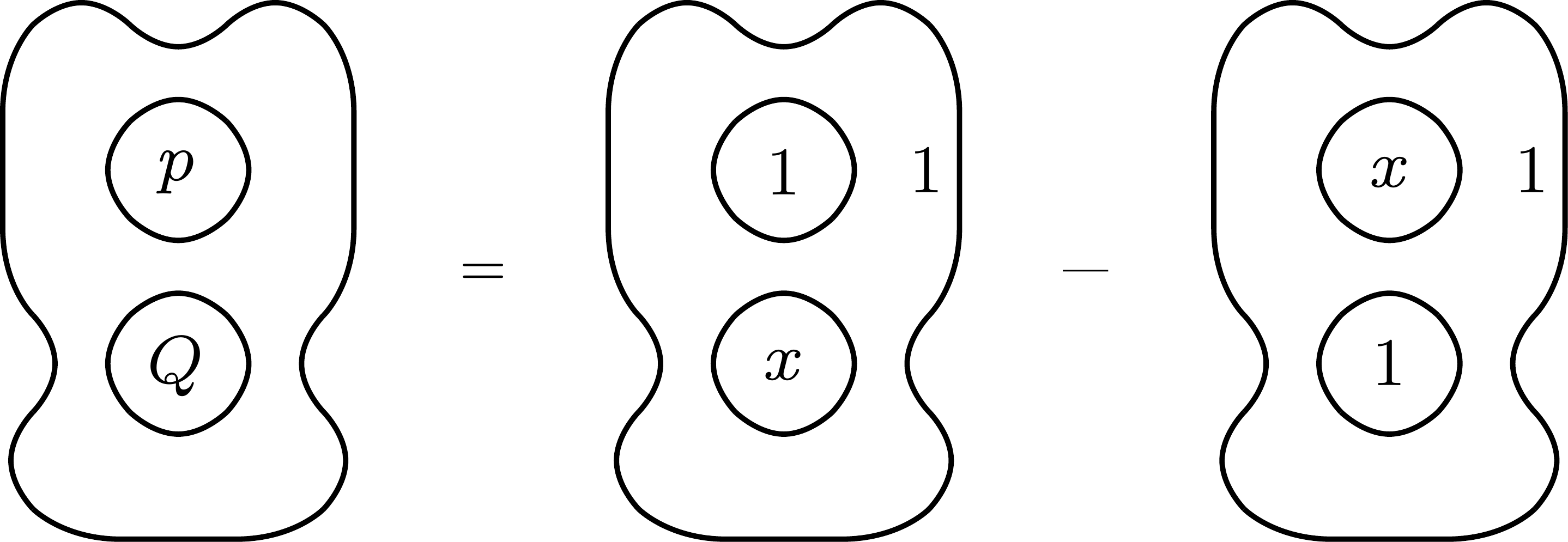}\vspace{-10pt}
		\caption{A $pqr$-chain on the all $0$-state of the figure $8$ knot .}
		\label{fig_pqr}
	\end{figure}
	\vspace{-5pt}
	\subsection{{Maps on Khovanov homology}} \label{bkh_maps}
	Here, we describe the map on Khovanov homology induced by a link cobordism.	A link cobordism $\Sigma\: L_0 \to L_1$ is a smooth, compact, oriented, properly-embedded surface $\Sigma \subset \R^3 \times [0,1]$ whose boundary is a pair of oriented links $L_i = \Sigma \cap (\R^3 \times \{i\})$. A generic
		\footnote{A link cobordism $\Sigma\: L_0 \to L_1$ is \textit{generic} if, with respect to the $[0,1]$ factor of $\R^3 \times [0,1]$, it restricts to a Morse function with distinct critical values. When generic, each level set $L_t = \Sigma \cap (\R^3 \times \{t\})$ is a link except at finitely many \textit{critical levels} $t_1, \dots, t_n$, where it contains either a transverse double point or an isolated point. In this paper, we assume all link cobordisms are generic.}
	link cobordism with surface diagram	
		\footnote{A \textit{surface diagram} $S\: D_0 \to D_1$ of a generic link cobordism $\Sigma\: L_0 \to L_1$ is the image $S \subset \R^2 \times I$ of $\Sigma$ under a generic projection $(p \times \text{id})\: \R^3 \times I \to \R^2 \times I$. Under these conditions, each $D_i = S \cap (\R^2 \times \{i\})$ is a diagram of the boundary link $L_i$ except at finitely many \textit{critical levels} $t_1, \dots, t_n$, corresponding to the critical levels of $\Sigma$.} 
	$S\: D_0 \to D_1$ induces a movie\!
		\footnote{A \textit{movie} $D_t$ associated to a surface diagram $S\: D_0 \to D_1$ of a generic link cobordism $\Sigma\: L_0 \to L_1$ is a collection of \textit{frames} $D_t = S \cap \R^2 \times \{t\}$. In a sufficiently small neighborhood $[t_i-\varepsilon(i), t_i + \varepsilon(i)]$ of each critical level $t_i$ of $S$, the diagrams $D_{t_i - \varepsilon(i)}$ and $D_{t_i + \varepsilon(i)}$ are related by either a Morse move or Reidemeister move. In each interval $(t_i, t_{i+1})$, the movie describes an isotopy between $D_{t_i + \varepsilon(i)}$ and $D_{t_{i+1} - \varepsilon(i+1)}$. Thus, a movie can be realized as a finite sequence of isotopies, Morse moves, and Reidemeister moves applied to $D_0$ and ending with $D_1$.}
	$D_t$. This movie can be decomposed into a finite sequence of link diagrams, with each successive pair related by an isotopy, Morse move, or Reidemeister move. These finite sequences of diagrams provide a way to explicitly express a link cobordism.
	
	For diagrams $D_0$ and $D_1$ related by a Morse or Reidemeister move, there is a bigraded chain map $\ckh(D_0) \to \ckh(D_1)$. The Reidemeister induced maps are chain equivalences. Explicit statements of these chain maps exist throughout the literature; we will not give them here. The Morse induced maps are those used in constructing the Khovanov chain complex, and can be found in most resources presenting this construction. Explicit definitions of the Reidemeister moves are harder to come by. In \cite{elliott09_1}, chain maps for the first and second Reidemeister moves are defined on enhanced states. In \cite{barnatan05}, all but one Reidemeister III move is given using a version of Khovanov homology defined categorically with cobordisms.
	
	To define the maps on Khovanov homology induced by a link cobordism $\Sigma\: L_0 \to L_1$, we use the following process. Choose a surface diagram $S\: D_0 \to D_1$ for $\Sigma$ and express it as a movie $D_t$. Shorten the movie to a finite sequence of diagrams, with each successive pair of diagrams related by an isotopy, Morse move, or Reidemeister move. Each of these relations between pairs of diagrams corresponds to a chain map. Sequentially compose the chain maps according to the sequence of diagrams. The result is a bigraded chain map on the Khovanov chain complex
		$$\ckh(S)\: \ckh^{i,j}(D_0) \to \ckh^{i,j+\chi(\Sigma)}(D_1)$$
	inducing a bigraded homomorphism $\kh(S)$ on homology. This chain map is invariant, up to sign and chain homotopy, under isotopy of $\Sigma$ rel $\partial \Sigma$, as in the following theorem.
	
	\stepcounter{theorem}
	\begin{theorem} \label{bkh_theorem1}
		\cite{jacobsson04} If $S, S'\: D_0 \to D_1$ are surface diagrams for link cobordisms $\Sigma, \Sigma'\: L_0 \to L_1$ that are isotopic relative to their common boundary, then $\ckh(S) \sim \pm \ckh(S')$.
	\end{theorem}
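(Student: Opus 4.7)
My plan is to reduce this isotopy-invariance statement to a finite combinatorial check via the Carter--Saito movie move theorem. That theorem asserts that two movies represent surface diagrams of isotopic (rel boundary) link cobordisms if and only if they can be connected by a finite sequence of local modifications, the $15$ \emph{movie moves}, together with interchanges of distant Morse/Reidemeister events and trivial insertions. Granting this, Theorem \ref{bkh_theorem1} will follow if I verify, for each movie move, that the two sides induce chain maps on $\ckh$ which agree up to sign and chain homotopy.

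First I would confirm that $\ckh(S)$ is well defined up to chain homotopy on the level of a fixed movie: different ways of discretizing the movie $D_t$ into a finite sequence of single-move steps (isotopy, Morse, or Reidemeister) produce chain-homotopic compositions. The isotopy pieces induce the identity on the combinatorial chain complex up to canonical relabelings of enhanced states, while functoriality of composition of chain maps takes care of concatenation. At this point, any two surface diagrams of $\Sigma$ (up to rel-boundary isotopy) are related by a finite sequence of movie moves and distant-event interchanges, so it suffices to analyze each such local modification.

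I would then work through the movie moves in three batches, as is customary. The first batch consists of moves where one side is a trivial composition of Reidemeister moves (a Reidemeister move followed by its inverse, two distant commuting Reidemeister moves, etc.); here one checks directly on enhanced states that the induced chain map is chain-homotopic to the identity. The second batch consists of moves where a Morse critical point (birth, death, saddle) slides past a Reidemeister move or another Morse event; here one checks commutativity of the corresponding chain maps up to chain homotopy using the explicit formulas for the Morse-induced maps and the Reidemeister chain equivalences from \cite{elliott09_1} and \cite{barnatan05}. The third batch consists of the genuinely three-dimensional moves, involving Reidemeister III interacting with itself, with Reidemeister II, and with saddles or cups/caps.

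The main obstacle will be this third batch: verifying the moves that involve Reidemeister III is where signs genuinely enter, and it is precisely here that the $\pm$ in the statement is necessary. The cleanest way I see to handle these is to follow Bar-Natan's approach in \cite{barnatan05}, working in the cobordism category and then applying the TQFT, which collapses many of the case analyses to checks against the $S$, $T$, and $4$Tu relations, at the cost of having to reinterpret the enhanced-state chain maps of Section \ref{bkh_maps} in those terms. If instead one stays at the level of enhanced states, each Type III movie move unfolds into a large (but finite) diagram chase producing an explicit chain homotopy, and one must track a global sign through each such chase; this is where Jacobsson's original calculation concentrates its effort. Either way, once every movie move is verified, the theorem follows by composing the local agreements along the sequence of movie moves relating $S$ and $S'$.
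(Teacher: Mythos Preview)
The paper does not actually prove this theorem: it is stated as a citation to \cite{jacobsson04} with no argument given, so there is no ``paper's own proof'' to compare against. Your outline is a faithful sketch of Jacobsson's original strategy---reduce to the Carter--Saito movie moves and verify each one up to sign and chain homotopy---so in that sense your proposal matches the cited source rather than differing from it.
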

	
	\noindent Thus, the induced homomorphism $\kh(S)$ is an up-to-sign invariant of the boundary-preserving isotopy class of $\Sigma$. Throughout this paper, we will use the common shorthand $\ckh(\Sigma) := \ckh(S)$ and $\kh(\Sigma) := \kh(S)$. The reader is warned that this can be misleading, as it does not specify the surface diagram with which the domain and codomain are defined. When one is not specified, assume we are working with a surface diagram, whereby the (co)domain of either map are defined by the associated link diagrams. 
	
	\begin{remark}
		The induced map is functorial with respect to composition\!
			\footnote{A pair of link cobordisms $\Sigma\: L_0 \to L_1$ and $\Sigma'\: -L_1 \to L_2$, can be composed by scaling $\Sigma$ into $\R^2 \times [0, \frac12]$ and $\Sigma'$ into $\R^2 \times [\frac12, 1]$, forming the link cobordism $\Sigma' \circ \Sigma\: L_0 \to L_2$.}
		of link cobordisms: for link cobordisms $\Sigma\: L_0 \to L_1$ and $\Sigma'\: -L_1 \to L_2$, the induced Khovanov chain maps satisfy
		$$\ckh(\Sigma') \circ \ckh(\Sigma) = \ckh(\Sigma' \circ \Sigma).$$
	Note that this composition makes sense, as $\ckh(L_1) = \ckh(-L_1)$. Conversely, a link cobordism can be decomposed as a composition of a pair of link cobordisms by cutting along some non-critical level of the cobordism. In such a case, the induced map also decomposes in the expected manner.
	\end{remark}
\end{section}

%
%

\begin{section}{{Relative Khovanov-Jacobsson Classes}} \label{rkjc}
	
	In this section, we define the relative Khovanov-Jacobsson cycle and class associated to a link cobordism $\emptyset \to L$. We discuss the dependence of each on the chosen diagram of the boundary link.
	
	\subsection{{Definition}} \label{rkjc_definition}
	In the previous section, we showed how a link cobordism $\Sigma\:\emptyset \to L$ with surface diagram $S\: \emptyset \to D$ induces a chain map $\ckh(S)\: \ckh(\emptyset) \to \ckh(D)$ with induced homomorphism $\kh(S)\: \kh(\emptyset) \to \kh(D)$. Since both $\ckh(\emptyset)$ and $\kh(\emptyset)$ are infinite cyclic, or more generally some commutative ring with unity $1$, this map is determined by the image of $1$. Theorem \ref{bkh_theorem1} then implies
		$$\kj_{\Sigma,D} := |\kh(\Sigma)(1)|$$
	is an invariant of the boundary-preserving isotopy class of $\Sigma$. We call this element the \textit{relative Khovanov-Jacobsson class} of $\Sigma$ with respect to the diagram $D$.
	
	In practice, the chain element $\ckh(\Sigma)(1)$ is more valuable than $\kj_{\Sigma,D}$, as it encodes all the necessary information for both maps $\ckh(\Sigma)$ and $\kh(\Sigma)$. For this reason, the element
		$$\ckj_{\Sigma,D} := \ckh(\Sigma)(1)$$ 
	is called the \textit{relative Khovanov-Jacobsson cycle}, and, up to sign, it represents $\kj_{\Sigma,D}$. 
	
	We should verify $\ckj_{\Sigma,D}$ is a cycle. The chain complex $\ckh(\emptyset)$ is supported in the $(0,0)$-grading, so we must have $d(1) = 0$. Because $\ckh_\Sigma$ is a chain map, it follows that
		$$d(\ckj_{\Sigma,D}) = d(\ckh(\Sigma)(1)) = \ckh(\Sigma)(d(1)) = \ckh(\Sigma)(0) = 0.$$
	So $\ckj_{\Sigma,D}$ is a cycle in $\ckh(D)$, representing a homology class in $\kh(D)$.
	
	As said in the introduction, the case where $L = \emptyset$ has been fully established. In this case, we call $\kj_{\Sigma,\emptyset}$ the \textit{Khovanov-Jacobsson number} of $\Sigma$. In light of this paper, one might call it an  \textit{absolute Khovanov-Jacobsson class}. Initial observation shows that, because $\ckh(\emptyset)$ is supported in the $(0,0)$-grading and $\ckh(\Sigma)$ is a $(0,\chi(\Sigma))$-graded map, we have $\kj_{\Sigma,\emptyset} = 0$ for surfaces with $g(\Sigma) \neq 1$. The case where $g(\Sigma) = 1$ is harder, but the Khovanov-Jacobsson numbers were established:

	\begin{theorem} \label{tanakarasmussen} \cite{rasmussen05} \cite{tanaka05} 
		A link cobordism $\Sigma\: \emptyset \to \emptyset$ with genus $g(\Sigma) = 1$ has absolute Khovanov-Jacobsson class $\kj_{\Sigma,\emptyset} = 2$.
	\end{theorem}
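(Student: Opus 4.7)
The plan is to use functoriality of $\ckh$ under composition of cobordisms to decompose $\Sigma$ into elementary Morse pieces, compute $\ckh(\Sigma)(1)$ on a standard torus, and then reduce the general case to that model.

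After a small perturbation, the height function on $\Sigma$ is Morse, decomposing $\Sigma$ into index-$0$ critical points (births), index-$1$ critical points (saddles), and index-$2$ critical points (deaths), whose counts $b$, $s$, $d$ satisfy $b - s + d = \chi(\Sigma) = 0$. A minimal handle structure on a connected genus-$1$ surface has $b = d = 1$ and $s = 2$, realized by the movie: birth of an unknot, a saddle splitting it into two circles, a saddle remerging them, and a death. Using the Frobenius algebra $\ckh(\bigcirc) = \Z\langle 1, x\rangle$ with $\Delta(1) = 1 \otimes x + x \otimes 1$, $m(1 \otimes x) = m(x \otimes 1) = x$, unit $1 \mapsto 1$, and counit sending $x \mapsto 1$, $1 \mapsto 0$, the induced composite is
\[
1 \xmapsto{\text{birth}} 1 \xmapsto{\Delta} 1 \otimes x + x \otimes 1 \xmapsto{m} 2x \xmapsto{\text{death}} 2,
\]
so $\kj_{\Sigma, \emptyset} = 2$ in this standard case.

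To extend the computation to an arbitrary genus-$1$ surface, I would argue that its movie can be reduced, while preserving $\ckh(\Sigma)(1)$ up to sign, to the minimal one above. The allowable moves are: cancellation of a birth--saddle or saddle--death pair, realized at the chain level by the Frobenius unit and counit identities; insertion or removal of a $2$-sphere summand, whose contribution vanishes because $\ckh(\emptyset)$ is supported in quantum grading $0$ while a closed sphere cobordism induces a map of quantum degree $+2$; and Reidemeister moves between consecutive frames, which are chain homotopy equivalences by Theorem \ref{bkh_theorem1}. Standard handle-trading in Morse theory would then reduce an arbitrary Morse decomposition of $\Sigma$ to the minimal one through these three move types.

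The main obstacle is the reduction step itself: a priori, a closed genus-$1$ surface in $\R^3 \times [0,1]$ may be globally knotted (for instance, a spun or twist-spun torus), and it is not automatic that its movie is equivalent to that of the standard torus through moves of the above types. Both \cite{rasmussen05} and \cite{tanaka05} circumvent this difficulty in different ways: Rasmussen cuts $\Sigma$ at a non-critical level and reinterprets $\ckh(\Sigma)(1)$ via the $s$-invariant of the resulting boundary link, while Tanaka proceeds combinatorially using chart-like presentations of surface knots. Either route constitutes the substantive content of the theorem; the elementary Frobenius calculation above merely supplies the target value~$2$.
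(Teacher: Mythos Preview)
The paper does not itself prove this theorem; it is quoted from \cite{rasmussen05} and \cite{tanaka05} and used as a black box thereafter. So there is no in-paper argument to compare your proposal against.

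That said, your assessment of the situation is essentially correct. The Frobenius computation for the standard unknotted torus is right and is the easy half. You are also right that the reduction step is where all the content lies, and that your proposed moves do not supply it: birth--saddle and saddle--death cancellations, handle slides, and Reidemeister equivalences between frames are all isotopy-level operations on the embedded surface, so they can never take a knotted torus to the standard one. (Your third move, removing a disjoint $2$-sphere component, is not isotopy-level, but it also does not preserve the induced map: a sphere factor contributes~$0$, so removing it changes $\ckh(\Sigma)(1)$ from $0$ to something nonzero rather than leaving it fixed.) You acknowledge this gap yourself, so the proposal is an honest sketch rather than a proof.

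Your description of the two published arguments is in the right spirit. Rasmussen's route goes through the Lee deformation, where the closed-surface invariant becomes tractable because Lee homology of any link is determined purely by linking data; Tanaka's is a direct combinatorial argument on surface-knot presentations. In both cases the substance is entirely in handling the ambient knotting of the torus, which is precisely the step your write-up defers.
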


	\begin{remark}
		Relative Khovanov-Jacobsson classes bear some resemblance to the invariant from knot Floer homology defined in \cite{juhaszmarengon16}. It is unclear if the two invariants are related (for example, by the Dowlin spectral sequence).
	\end{remark}

	\subsection{{Diagram Dependence}} \label{rkjc_diagram}
	The relative Khovanov-Jacobsson class of a link cobordism $\Sigma\: \emptyset \to \!L$ depends on the chosen surface diagram $S\: \emptyset \to D$. More specifically, it depends on the chosen link diagram: a diagram for the boundary link induces a surface diagram for the link cobordism because both are defined by the same projection. This dependence is straightforward: the diagram specifies the chain group and homology group to which $\ckj_{\Sigma,D}$ and $\kj_{\Sigma,D}$ respectively belong. When we change the surface diagram, these groups change by a Reidemeister induced equivalence. To measure the diagram dependence of relative Khovanov-Jacobsson classes, one could ask whether they are preserved by the Reidemeister induced equivalences. In this way, one could say that relative Khovanov-Jacobsson classes are \textit{link cobordism invariants}, being invariant of the chosen surface diagram. We prove this fact in Proposition \ref{rkjc_proposition1}, however, we must first make a definition. 
	
	\begin{remark} \label{remark1}
		Because we are studying the boundary-\textit{preserving} isotopy class of $\Sigma$, we emphasize that this paper only considers the link $L$, and not its isotopy class. Moreover, we emphasize that the permissible chain homotopies defining $\ckh(L)$ arise from a sequence of \textit{link specific} Reidemeister moves (made precise below), and not by \textit{any} sequence of Reidemeister moves.
		
		A diagram for a link $L$ is the image of some projection $p\: \R^3 \to \R^2$ onto a codimension-one, linear subspace. Projections $p, p'\: \R^3 \to \R^2$ defining diagrams $D, D'$ are related by a one-parameter family of rotations $r_t\: \R^3 \times I \to \R^3$ with $r_0 = \text{id}$ taking one projection onto the other $p' = p \circ r_1$. A small perturbation of $r$ makes $p \circ r_t$ generic as a link projection, whereby $(p \circ r_t)(L)$ describes a sequence of \textit{link specific} Reidemeister moves from $D$ to $D'$, meaning each Reidemeister move relates a pair of diagrams specific to $L$. The chain map induced by these Reidemeister moves, and the map it induces on homology, will be called a \textit{link specific Reidemeister induced map}.
		
		Thus, $\ckh(L)$ denotes the chain homotopy class of $\ckh(D)$ up to link specific Reidemeister induced chain homotopy equivalences, and $\kh(L)$ denotes the isomorphism class of $\kh(D)$ under link specific Reidemeister induced isomorphisms.
	\end{remark}
	
	\begin{proposition} \label{rkjc_proposition1}
		For a link cobordism $\Sigma\: \emptyset \to L$ with surface diagrams $S, S'\: \emptyset \to D, D'$, this diagram commutes, up to sign and up to homotopy, for link specific Reidemeister induced maps $\varphi$.
	\end{proposition}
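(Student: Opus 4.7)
The plan is to realize the link specific Reidemeister induced map $\varphi$ itself as a chain map arising from a surface diagram of the identity cobordism $L \times [0,1] \: L \to L$, and then read off the commutativity from functoriality together with Theorem~\ref{bkh_theorem1}.

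First I would invoke the construction in Remark~\ref{remark1}: the projections $p$ and $p'$ defining the diagrams $D$ and $D'$ of $L$ are connected by a one-parameter family of rotations $r_t$, and after a small perturbation the family $(p \circ r_t)(L)$ realizes a generic movie whose non-Reidemeister frames are honest diagrams of $L$. Stacking this family into $\R^2 \times [0,1]$ yields a surface diagram $T \: D \to D'$ for the product cobordism $\Sigma_{\mathrm{id}} = L \times [0,1]$, and the map induced by this movie is, by definition, the link specific Reidemeister induced map $\varphi = \ckh(T)$.

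Next I would paste $T$ on top of the surface diagram $S \: \emptyset \to D$ of $\Sigma$. The composite $T \circ S \: \emptyset \to D'$ is a surface diagram for the link cobordism $\Sigma_{\mathrm{id}} \circ \Sigma$, which is boundary-preserving isotopic to $\Sigma$ itself (the collar $\Sigma_{\mathrm{id}}$ can be absorbed into a regular neighborhood of $\partial \Sigma$). By the functoriality remark following Theorem~\ref{bkh_theorem1},
$$\varphi \circ \ckh(S) \;=\; \ckh(T) \circ \ckh(S) \;=\; \ckh(T \circ S).$$
Since $T \circ S$ and $S'$ are both surface diagrams (with top diagram $D'$) for surfaces isotopic rel boundary, Theorem~\ref{bkh_theorem1} gives $\ckh(T \circ S) \sim \pm \ckh(S')$, hence $\varphi \circ \ckh(S) \sim \pm \ckh(S')$. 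Evaluating both sides on $1 \in \ckh(\emptyset)$ yields $\varphi(\ckj_{\Sigma,D}) \sim \pm \ckj_{\Sigma, D'}$, and passing to homology gives $\varphi(\kj_{\Sigma,D}) = \pm \kj_{\Sigma,D'}$, which is exactly the asserted commutativity of the square.

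The point that requires the most care, and where I expect the main subtlety to lie, is precisely the "link specific" hypothesis built into Remark~\ref{remark1}. It is what guarantees that the intermediate frames of the movie from $D$ to $D'$ are themselves diagrams of $L$, so that the stacked surface $T$ is genuinely a surface diagram of the product cobordism $L \times [0,1]$ rather than of some ambient homotopy passing through other link types; only then does functoriality legitimately glue $T$ on top of $S$ to produce a cobordism isotopic rel boundary to $\Sigma$. Once this identification is justified, the remainder of the argument is a formal consequence of Theorem~\ref{bkh_theorem1} and composition of induced maps.
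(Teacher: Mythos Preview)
Your overall strategy---realize $\varphi$ as the map induced by a cylinder and then invoke Theorem~\ref{bkh_theorem1}---is exactly what the paper does. However, you misidentify the cobordism underlying $T$, and this is precisely the point the paper flags as ``the tricky part.''

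You write that $T$ is a surface diagram of the product $\Sigma_{\mathrm{id}} = L \times [0,1]$. It is not. Under the fixed projection $p$, the product $L \times [0,1]$ projects to the constant movie $D \times [0,1]$, not to $T$. The movie $(p \circ r_t)(L)$ is instead the $p$--projection of the trace $\{(r_t(x),t) : x \in L,\, t \in [0,1]\}$, a cobordism from $L$ to $r_1(L)$. Consequently the composite cobordism sitting over $T \circ S$ ends at $r_1(L)$, while $\Sigma$ (the cobordism that $S'$ diagrams) ends at $L$. These two cobordisms do not share a boundary, so Theorem~\ref{bkh_theorem1} cannot be applied to them directly, and your phrase ``the collar $\Sigma_{\mathrm{id}}$ can be absorbed into a regular neighborhood of $\partial\Sigma$'' does not literally make sense. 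Your final paragraph misdiagnoses the issue: the link--specific hypothesis ensures each frame is a diagram of $L$, but it does not force the upper boundary of the trace to equal $L$ in $\R^3$.

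The fix the paper supplies is to observe that $S'$, being the $p'$--projection of $\Sigma$, is equally the $p$--projection of $r_1(\Sigma)$ (since $p' = p \circ r_1$), so $S'$ is also a surface diagram of a cobordism ending at $r_1(L)$. One then needs an explicit isotopy rel $r_1(L)$ between $r_1(\Sigma)$ and the trace--composed cobordism; the paper writes it down as the family $A_s^{-1} \circ r_s(\Sigma)$ (with the direction of $\varphi$ reversed relative to yours). Once you insert this step, your argument becomes the paper's.
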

	\begin{center}
		\begin{tikzcd}[column sep = -5pt, row sep = 25pt]
			\ckh(D') \arrow[rr, "\varphi"] & & \ckh(D) \\
		 & \ckh(\emptyset) \arrow[ul, "\ckh(S')"] \arrow[ur, "\ckh(S)"']
		\end{tikzcd}
	\end{center}\vspace{-2.5pt}
	\textit{Thus, the class $\ckj_{\Sigma,L}$ of relative Khovanov-Jacobsson cycles $\ckj_{\Sigma,D}$ related by link specific Reidemeister induced chain maps, and the class $\kj_{\Sigma,L}$ defined similarly, are link cobordism invariants.} \\
	
	\begin{proof}
		The idea is to construct a pair of link cobordisms inducing $\varphi \circ \ckh(S')$ and $\ckh(S)$ that are isotopic relative to $L$, whereby Theorem \ref{bkh_theorem1} \! implies the diagram commutes in the desired manner. The tricky part is producing isotopic cobordisms whose boundaries produce identical diagrams \textit{with respect to the same projection}. 
		
		Let $p, p'\: \R^3 \to \R^2$ be the projections defining the surface diagrams $S$ and $S'$. Remark \ref{remark1} gives a one-parameter family of rotations $r_t$ with $r_0 = \text{id}$ and $p' = p \circ r_1$ that induces a movie $(p \circ r_t)(L)$ describing a sequence of link specific Reidemeister moves from $D$ to $D'$. Consider the one-parameter family of link cobordisms describing the trace of $L$ under this movie:
			$$A_s = r|_{L \times [0,1-s]}\: r_0(L) \to r_{1-s}(L)$$
		The link cobordism $A_0^{-1}\: r_1(L) \to L$ has surface diagram $p(A_0^{-1})\: D' \to D$, so with respect to $p$, this link cobordism $A_0^{-1} \circ r_1(\Sigma)$ induces the map $\varphi \circ \ckh(S')$. Since $\Sigma$ induces $\ckh(S)$ with respect to $p$, it suffices to show $A_0^{-1} \circ r_1(\Sigma)$ and $\Sigma$ are isotopic relative to $L$. Indeed, $A_s^{-1} \circ r_s(\Sigma)$ describes a boundary-preserving isotopy between these link cobordisms, as desired.
	\end{proof}
	
	In light of this proposition, the diagram defining a relative Khovanov-Jacobsson cycle or class is either clear from context or is unlikely to provide any resistance to an argument. Thus, we often omit it from our notation, writing $\ckj_\Sigma$ and $\kj_\Sigma$ instead of $\ckj_{\Sigma,D}$ and $\kj_{\Sigma,D}$.
	
\end{section}

%
%

\begin{section}{{Calculating Relative Khovanov-Jacobsson Classes}} \label{crkjc}
	
	In this section, we provide a general commentary on calculating relative Khovanov-Jacobsson classes. We give two classifications: for surfaces bounding the unlink and for surfaces obtained by Seifert's algorithm. We discuss techniques for determining nontriviality of enhanced state cycles.
	
	\subsection{{Calculating Induced Maps}} \label{crkjc_techniques}

	Calculating the relative Khovanov-Jacobsson class of a link cobordism $\Sigma\: \emptyset \to L$ depends entirely on calculating the cycle $\ckj_\Sigma$, which in turn requires a thorough understanding of the chain map $\ckh(\Sigma)$. As mentioned in Section \ref{bkh_maps}, the chain maps defining $\ckh(\Sigma)$ can be found in numerous locations throughout the literature. For an example calculation using these maps, see Figure \ref{fig_946left} at the end of the paper.
	
	Although general calculations require a thorough understanding of $\ckh(\Sigma)$, there are extreme cases that can be understood from a theoretical standpoint, as in the following two theorems.
	
	\subsection{{Calculations for Surfaces Bounding the Unlink}} \label{crkjc_unlink}
	Here, we classify relative Khovanov-Jacobsson classes for surfaces bounding the unlink, as in the following theorem.
	
	\stepcounter{theorem}
	\begin{theorem} \label{crkj_theorem1} 
		Let $U$ be the $n$-component unlink and $D$ its crossingless diagram. The relative Khovanov-Jacobsson class for a connected link cobordism $\Sigma: \emptyset \to U$ is determined by its genus:
		\begin{itemize}
			\item[\textnormal{(a)}] if $g(\Sigma) = 0$, then $\ckj_{\Sigma,D}$ is, up to sign, a $pqr$-chain on the components of $D$;
			\item[\textnormal{(b)}] if $g(\Sigma) = 1$, then $\ckj_{\Sigma,D}$ is, up to sign, twice the all $x$-label $\sigma_x$;
			\item[\textnormal{(c)}] if $g(\Sigma) \geq 2$, then $\ckj_{\Sigma,D} = 0$.
		\end{itemize}
	\end{theorem}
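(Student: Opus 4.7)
The plan is to use the bigrading of $\ckh(\Sigma)$ to severely constrain where $\ckj_{\Sigma,D}$ can live in $\ckh(D)$, and then bootstrap the three parts off of each other, using Theorem \ref{tanakarasmussen} as the only external input. Since $D$ is crossingless, $\ckh(D)$ is supported in homological grading $0$ and generated by the $2^n$ enhanced states labeling each component with $1$ or $x$. The map $\ckh(\Sigma)$ has bidegree $(0, \chi(\Sigma)) = (0, 2 - 2g - n)$, so $\ckj_{\Sigma,D}$ must live in that quantum grading. The three cases correspond to this grading being below, equal to, or above the minimum possible quantum grading $-n$, which is attained only by $\sigma_x$.

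Part (c) is immediate: for $g \geq 2$ the quantum grading is $\leq -n - 2 < -n$, so $\ckh(D)$ has no generators in that grading and $\ckj_{\Sigma,D} = 0$. For part (b) the grading is exactly $-n$, so $\ckj_{\Sigma,D} = k \sigma_x$ for some $k \in \Z$. I would then compose with the cobordism $C\: U \to \emptyset$ capping each component with a disk; the result is a closed connected surface of genus $1$ whose Khovanov-Jacobsson number is $2$ by Theorem \ref{tanakarasmussen}. Since each disk induces the counit $x \mapsto 1$, $1 \mapsto 0$, one has $\ckh(C)(\sigma_x) = 1$, and hence $|k| = 2$.

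Part (a) requires the most work. The quantum grading $2 - n$ is achieved exactly by the $n$ enhanced states $e_1,\ldots,e_n$ in which one loop is $1$-labeled and the rest are $x$-labeled; write $\ckj_{\Sigma,D} = \sum_j a_j e_j$, so the task reduces to showing each $|a_j| = 1$. I would first handle the base case $n = 1$, where $\Sigma$ is a slice disk for the unknot and $\ckj_{\Sigma,D} = a \cdot 1$ for some $a \in \Z$. The creative step is to enlarge $\Sigma$ by a genus-$1$ handle: insert into its movie a splitting saddle on the bounding unknot followed by a merging saddle. The resulting cobordism $\Sigma'$ is connected of genus $1$ bounding the unknot, and the inserted handle induces the operator $\mu \circ \Delta$, which sends $1 \mapsto 2x$. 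Hence $\ckh(\Sigma')(1) = 2a \cdot x$, and applying part (b) with $n = 1$ forces $|2a| = 2$, so $|a| = 1$. For $n \geq 2$ and each $j$, let $C_j$ cap off every component of $U$ except the $j$-th; then $C_j \circ \Sigma$ is a connected genus-$0$ cobordism to the unknot, so the $n = 1$ base case yields $|\ckh(C_j \circ \Sigma)(1)| = 1$. A direct computation using the counit on each capped loop shows $\ckh(C_j)\bigl(\sum_i a_i e_i\bigr) = a_j \cdot 1$, forcing $|a_j| = 1$.

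The main obstacle is the base case $n = 1$ of part (a): the bigrading alone only constrains $\ckj_{\Sigma,D}$ to be an integer multiple of $1$, and additional input is required to control this coefficient on a possibly knotted slice disk of the unknot. The handle-and-bootstrap trick resolving this case is the key creative ingredient; everything else is grading analysis or a reduction via capping off components.
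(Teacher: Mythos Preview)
Your arguments for parts (b) and (c), and your argument that each $|a_j| = 1$ in part (a), are essentially the same as the paper's: both reduce to Theorem \ref{tanakarasmussen} by attaching a handle (you phrase it as inserting a split--merge pair and invoking part (b); the paper phrases it as capping the $j$th component with a punctured torus and the rest with disks, yielding a closed genus-$1$ surface directly).  These are equivalent manoeuvres.

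There is, however, a genuine gap in your treatment of part (a).  A $pqr$-chain on the components of $D$ means, up to a single global sign, the element $\sum_j e_j$; the claim is not merely that each coefficient is $\pm 1$, but that all the signs agree.  Your proposal stops after establishing $|a_j| = 1$ for each $j$, and the capping argument you give cannot distinguish $e_1 + e_2$ from $e_1 - e_2$, since each $C_j$ only sees one coefficient at a time.  The paper closes this gap with one more probe: for each pair $j \neq k$, let $C_{j,k}$ cap every component except $U_j$ and $U_k$ with a disk and join those two by an annulus.  The composite $C_{j,k} \circ \Sigma$ is again a closed connected genus-$1$ surface, so Theorem \ref{tanakarasmussen} gives $\ckh(C_{j,k})(\ckj_\Sigma) = \pm 2$, while a direct computation shows this equals $a_j + a_k$.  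Hence $a_j + a_k = \pm 2$ for all $j \neq k$, forcing all the $a_j$ to share a common sign.
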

	\begin{proof}
		The lemma follows from an analysis on the bigrading to which $\ckj_\Sigma$ belongs. We first determine the support of the chain complex $\ckh(D)$. As $D$ is a crossingless diagram, the chain complex $\ckh(D)$ is supported in the $0$-homological grading. To find the support of the quantum grading, we consider the enhanced states of $D$, which generate the chain groups of $\ckh(D)$. The quantum grading of an enhanced state $\alpha$ for the crossingless diagram $D$ of $U$ is given by 
			\begin{equation}\label{q1} q(\alpha) = v_+(\alpha) - v_-(\alpha)\end{equation}
		As $D$ is an $n$-component crossingless diagram, we also have 
			\begin{equation}\label{q2} n = v_+(\alpha) + v_-(\alpha)\end{equation}
		Neither of $v_\pm(\alpha)$ can be negative, so it follows that the chain group $\ckh(D)$ is supported in the $(0,j)$ grading for $-n \leq j \leq n$.
		
		To determine the exact bigrading for $\ckj_\Sigma$, recall that the induced map $\ckh(\Sigma)$ is a $(0, \chi(\Sigma))$ bigraded chain map, taking $1 \in \ckh^{0,0}(\emptyset)$ to a single quantum grading, $q(\ckh(\Sigma)(1)) = \chi(\Sigma)$. Thus, $\ckj_\Sigma$ belongs to the chain group $\ckh^{0,\chi(\Sigma)}(D)$, whereby it can be expressed as a linear combination of enhanced states from the same bigrading:
			$$\hspace{150pt}\ckj_\Sigma = \sum_{i=1}^m a_i \alpha_i\hspace{50pt} a_i \in \Z, \hspace{5pt} q(\alpha_i) = \chi(\Sigma).$$
		Furthermore, equations (1) and (2) combine with this quantum grading to show $v_+(\alpha_i) = 1 - g(\Sigma)$. 
		
		For $g(\Sigma) \geq 2$, the grading $q(\ckj_\Sigma) = 2 - 2g(\Sigma) - n < -n$ is outside the support, implying (c).
		
		For $g(\Sigma) = 1$, the enhanced states at this grading must satisfy $v_+(\alpha_i) = 0$. So only the all $x$-label  $\sigma_x$ is valid, and $\ckj_\Sigma = a\sigma_x$ for some $a \in \Z$. To establish (b), we need only determine the value of the coefficient. Consider the composition $C \circ \Sigma$, where $C$ is the surface that caps off each boundary component of $\Sigma$ with a disk. Note that $C \circ \Sigma$ is a closed, connected surface with genus $1$, so by the characterization of absolute Khovanov-Jacobsson classes (Theorem \ref{tanakarasmussen} \!), we have
			$$\ckh(C)(\ckj_\Sigma) = \ckh(C)(\ckh(\Sigma)(1)) = \ckh(C \circ \Sigma)(1) = \ckj_{C \circ \Sigma} = \pm2.$$
		Moreover, $C$ induces the map $\otimes_n \varepsilon$, where $\varepsilon = \{ 1 \mapsto 0; x \mapsto 1\}$,  on the enhanced states of $D$, so 
			$$\ckh(C)(\ckj_\Sigma) = \ckh(C)(a\alpha_-) = a.$$
		We conclude that $\ckj_\Sigma = \pm2\alpha_-$, establishing (b).
		
		For $g(\Sigma) = 0$, the only enhanced states at this grading are those consisting of a single $1$-label. In order to conclude (a), we must show that $a_i = \pm1$ and that all the $a_i$ have the same sign. Enumerate the components $U_i$ of $U$, and consistently carry over this enumeration to the components $D_i$ of of the crossingless diagram $D$. Let $\alpha_i$ be the enhanced state with $1$-labeled $D_i$ and $x$-labeled $D_{j \neq i}$.
		
		To see that each $a_i = \pm1$, consider $C_j \circ \Sigma$, where $C_j$ is the surface that caps off $U_j$ with a punctured torus and the remaining boundary component $U_{i \neq j}$ with a disk. Note $C_j \circ \Sigma$ is a closed, connected surface with genus $1$, so by the characterization of absolute Khovanov-Jacobsson classes,
			$$\ckh(C_j)(\ckj_\Sigma) = \ckh(C_j)(\ckh(\Sigma)(1)) = \ckh(C_j \circ \Sigma)(1) = \pm2.$$
		Moreover, $C_j$ induces the map $\varepsilon$ (defined above) on each $D_i$ and $\{1 \mapsto 2; x \mapsto 0\}$ on $D_j$. It follows that $\ckh(C_j)(\alpha_j) = 2$ and $\ckh(C_{i \neq j})(\alpha_j) = 0$, so applying $C_j$ to $\ckj_{\Sigma}$, we have 
			$$\ckh(C_j)(\ckj_\Sigma) = \ckh(C_j) \bigg( \sum_{i=1}^m a_i \alpha_i \bigg) = \sum_{i=1}^m a_i\ckh(C_j)(\alpha_i) = a_j\ckh(C_j)(\alpha_j) = 2a_j$$
		So $a_j = \pm 1$, as desired. 
		
		\begin{figure}[!ht]
			\includegraphics[scale=.3]{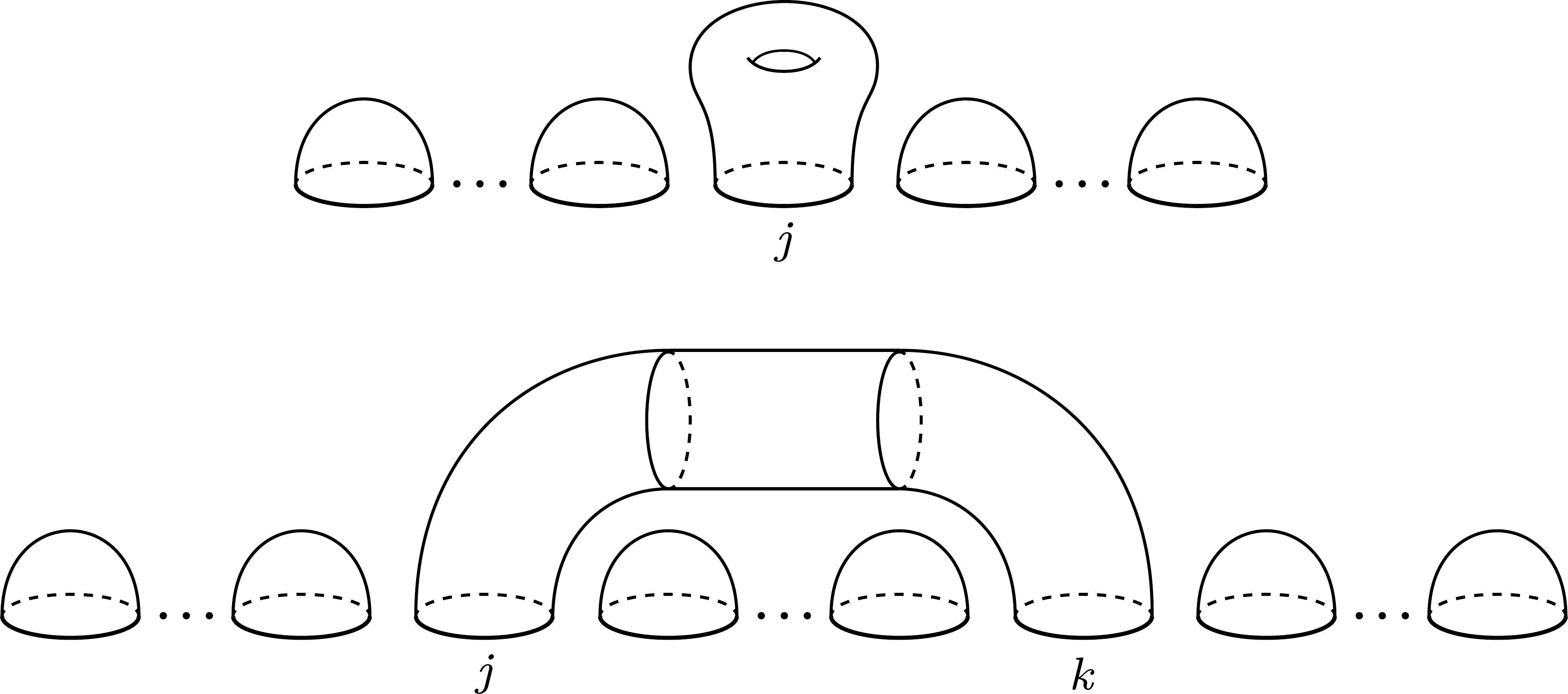}
			\caption{From left to right: the surfaces $C_j$ and $C_{j,k}$.}
			\label{fig_unlink_maps}
		\end{figure}
		
		To see each $a_i$ has the same sign, consider $C_{j,k} \circ \Sigma$, where $C_{j,k}$ is the surface that caps off each $U_{i \neq j,k}$ with a disk and connects the remaining two $U_j$ and $U_k$ with an anulus. A nearly identical calculation shows $a_j + a_k = \pm 2$ for each $j \neq k$, which holds only when $a_j = \pm1$ and $a_k = \pm1$ having the same sign. Thus, $\ckj_\Sigma$ is the desired $pqr$-chain on $D$.
	\end{proof}

	\begin{remark}
		Theorem \ref{crkj_theorem1} assumes that $\Sigma$ is a connected surface. In the case that there are multiple components, we may draw on the recent result of \cite{gujrallevine20}, which shows that for split links, the induced map $\ckh(\Sigma)$ is determined by the maps induced between components of $\Sigma$, independent of their potential linking. Thus, this theorem can be applied to each component of $\Sigma$ individually.
	\end{remark}
	
	\subsection{{Calculations for Surfaces Obtained from Seifert's Algorithm}} \label{crkjc_seifert}
	Here, we classify the relative Khovanov-Jacobsson classes of surfaces obtained by applying Seifert's algorithm. 
	
	Recall that to a diagram $D$ of an oriented link, Seifert's algorithm associates an oriented, compact surface $\Sigma_D \subset S^3$ with boundary $L$ by applying the following procedure: produce a smoothing $\sigma_D$ of $D$ according to its orientation\!
		\footnote{A diagram is smoothed according to its orientation if each positive (negative) crossing is $0$-smoothed ($1$-smoothed).}
	\!, resulting in a collection of planar \textit{Seifert circles}, that can be oriented accordingly; each Seifert circle bounds an appropriately oriented \textit{Seifert disk} in the plane, and these disks can be situated in $S^3$ so that an outermost disk is below any disk it contains; each crossing in $D$ connects two Seifert disks and is realized as the boundary of a twisted band between the disks. The collection of disks and bands is an oriented surface $\Sigma_D$, called a \textit{Seifert surface}. There is a movie description of $\Sigma_D$ inherent to the structure of this algorithm:
	\begin{itemize}
		\item[(1)] birth each Seifert disk in ascending order of their height (lowest, or outermost, first);
		\item[(2)] for each negative crossing, perform a \textit{negative twist-and-glue} move (Figure \ref{fig_twistnglue_negative} \!) consisting of a single negative Reidemeister I and a single Morse saddle;
		\item[(3)] for each positive crossing, perform a \textit{positive twist-and-glue} move (Figure \ref{fig_twistnglue_positive} \!) consisting of a single positive Reidemeister I and a single Morse saddle.
	\end{itemize}
	Following this description, decompose the Seifert surface $\Sigma_D = \Sigma_3 \circ \Sigma_2 \circ \Sigma_1$, where each $\Sigma_i$ is the surface corresponding to the $i$-th \textit{scene} of the movie listed above.
	
	\begin{figure}[!ht]
		\includegraphics[scale=.4]{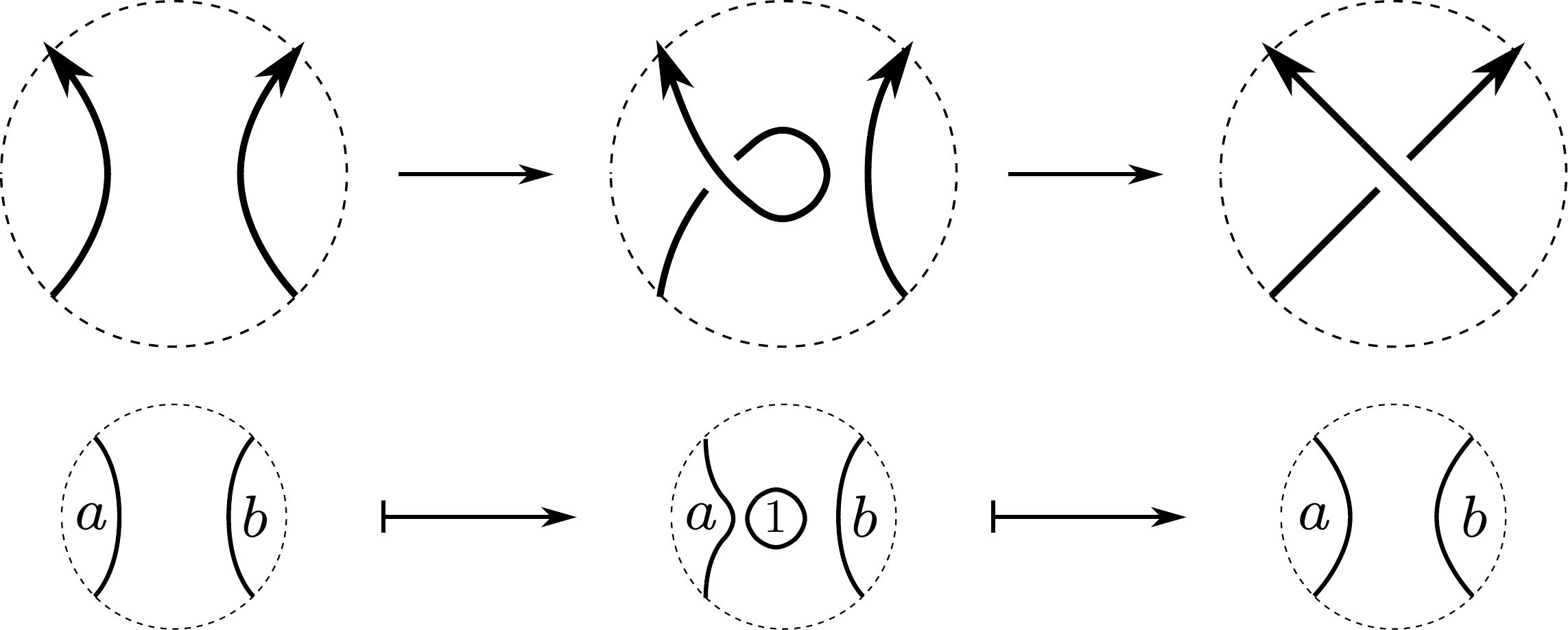}
		\caption{The negative twist-and-glue move, along with its induced chain map.}
		\label{fig_twistnglue_negative}
	\end{figure}
	\begin{figure}[!ht]
		\includegraphics[scale=.4]{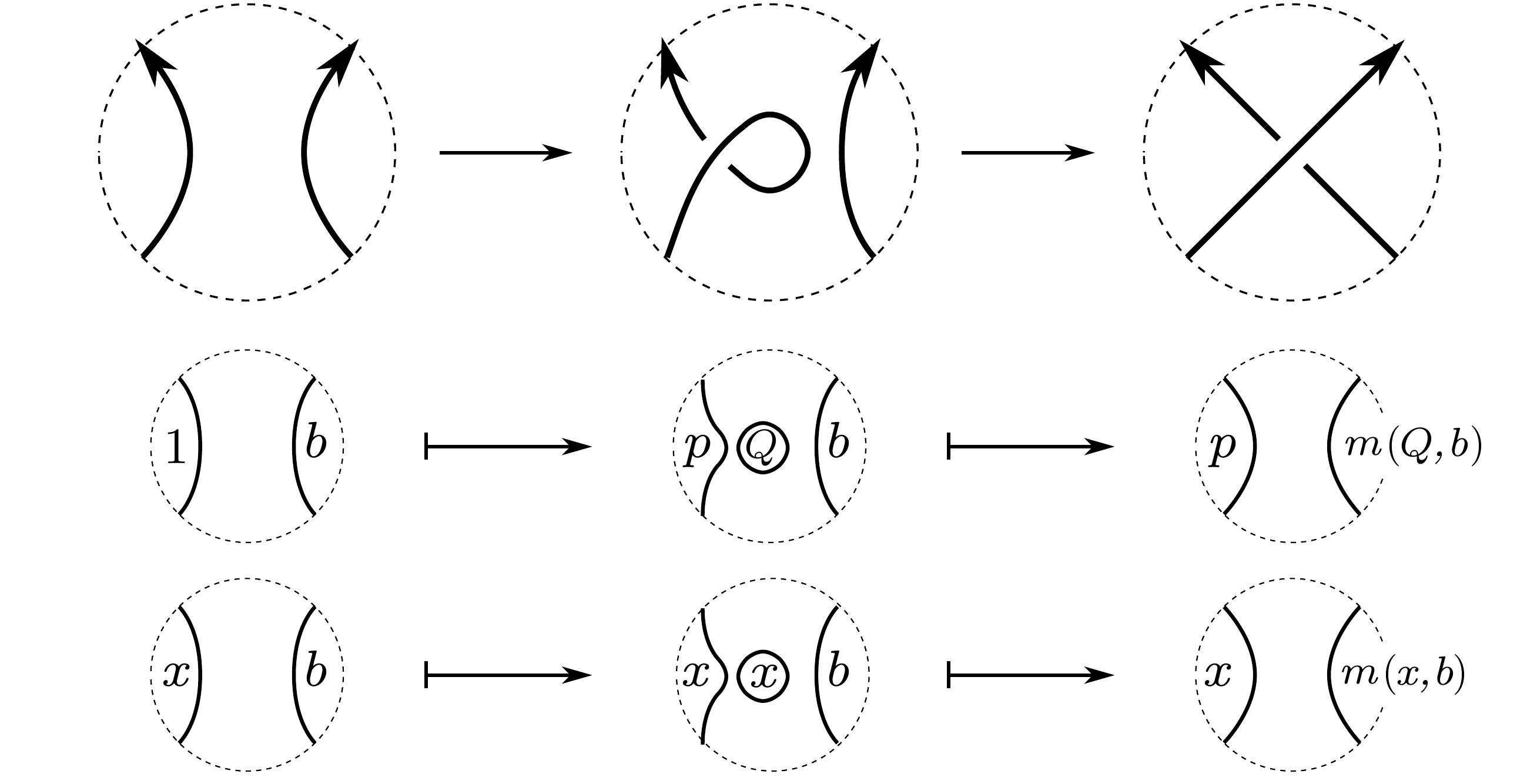}
		\caption{The positive twist-and-glue move, along with its induced chain map.}
		\label{fig_twistnglue_positive}
	\end{figure}

	\begin{remark}
		The cycle $\ckj_{\Sigma_D}$ produced by the above movie description of $\Sigma_D$ is fairly tame, with the decomposition $\Sigma_D = \Sigma_3 \circ \Sigma_2 \circ \Sigma_1$ allowing us to take this calculation in stages. First, note that $\ckh(\Sigma_1)(1)$ is the all-$1$ label $\sigma_1$ on the orientation-induced smoothing $\sigma_D$. Second, each negative twist-and-glue move induces the identity map, so $\ckh(\Sigma_2 \circ \Sigma_1)(1)$ is again the all-$1$ label $\sigma_1$ on $\sigma_D$. Third, each positive twist-and-glue move does not change the underlying smoothing, but instead affects only the labels. We conclude that 
			$$\ckj_{\Sigma_D} = \ckh(\Sigma_3 \circ \Sigma_2 \circ \Sigma_1)(1) = \ckh(\Sigma_3)\big(\ckh(\Sigma_2 \circ \Sigma_1)(1) \big) = \ckh(\Sigma_3)(\sigma_1)$$
		is a sum of enhanced states on the smoothing $\sigma_D$, with labels determined by the positive twist-and-glue moves. 
	\end{remark}
	
	\begin{remark}
		The positive crossings are all $0$-smoothed, so each positive twist-and-glue move occurs on an edge of $\Gamma_0$ (see Section \ref{bkh_elements} \!), whereby, the map $\ckh(\Sigma_3)$ is described by how it acts on the labels of vertices in $\Gamma_0$. The vertices of the state graph $\Gamma_{\sigma_D}$ that are not in the subgraph $\Gamma_0$ are $1$-labeled, and are not affected by the positive twist-and-glue moves. The following theorem shows that the homotopy type\ of the subgraph $\Gamma_0$ determines $\ckj_{\Sigma_D}$.
	\end{remark}
	
	\begin{theorem} \label{crkj_theorem2}
		The relative Khovanov-Jacobsson cycle $\ckj_{\Sigma_D}$ of a Seifert surface $\Sigma_D$ is determined by the first Betti number of the subgraph $\Gamma_0$ of the state graph $\Gamma_{\sigma_D}$ corresponding to the orientation-induced smoothing $\sigma_D$ of the chosen diagram $D$. Explicitly, for connected $\Gamma_0$ we have:
		\begin{itemize}
			\item[\textnormal{(a)}] if $b^1(\Gamma_0) = 0$, then $\ckj_{\Sigma_D}$ is a $pqr$-chain on the vertices of $\Gamma_0$  \textnormal{(}the sign of each summand in the $pqr$-chain is determined by the parity of the distance of its unique $1$-labeled vertex from a predetermined vertex $v_0$ of $\Gamma_0$\textnormal{)}.
			\item[\textnormal{(b)}] if $b^1(\Gamma_0) = 1$, then $\ckj_{\Sigma_D}$ is twice the enhanced state having $x$-labels on the vertices of $\Gamma_0$;
			\item[\textnormal{(c)}] if $b^1(\Gamma_0) \geq 2$, then $\ckj_{\Sigma_D} = 0$.
		\end{itemize}
	\end{theorem}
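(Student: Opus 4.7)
My plan is to combine a grading argument (parallel to the proof of Theorem~\ref{crkj_theorem1}) with an inductive computation along a spanning tree of $\Gamma_0$ using the chain map of Figure~\ref{fig_twistnglue_positive}. The grading argument will pin down which enhanced states can appear in $\ckj_{\Sigma_D}$, giving (c) immediately and reducing (a) and (b) to the determination of coefficients; the spanning-tree computation then supplies those coefficients.

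From the remark preceding the theorem, $\ckj_{\Sigma_D} = \ckh(\Sigma_3)(\sigma_1)$ is supported on the smoothing $\sigma_D$, with $1$-labels on every vertex of $\Gamma_{\sigma_D}$ that lies outside $\Gamma_0$. Let $s$ denote the number of Seifert circles, $s_0$ the number of vertices of $\Gamma_0$, and $c = n_+ + n_-$ the number of crossings of $D$, so that $\chi(\Sigma_D) = s - c$ and $h(\sigma_D) = 0$. Since $\ckh(\Sigma_D)$ is $(0,\chi(\Sigma_D))$-graded, the quantum-grading formula combined with $v_+ + v_- = s$ forces $v_- = n_+$ in every summand of $\ckj_{\Sigma_D}$. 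The $s - s_0$ vertices outside $\Gamma_0$ contribute $1$-labels, so the $n_+$ $x$-labels must be distributed among the $s_0$ vertices of $\Gamma_0$. For connected $\Gamma_0$ we have $b^1(\Gamma_0) = n_+ - s_0 + 1$, so $n_+ \leq s_0$ is equivalent to $b^1(\Gamma_0) \leq 1$. When $b^1 \geq 2$ no enhanced state meets these constraints, yielding (c); when $b^1 = 1$, the equation $n_+ = s_0$ forces the unique enhanced state $\sigma_x$, giving $\ckj_{\Sigma_D} = a\,\sigma_x$ for some $a \in \Z$; when $b^1 = 0$, $n_+ = s_0 - 1$ gives $\ckj_{\Sigma_D} = \sum_{v \in V(\Gamma_0)} a_v\,\alpha_v$, where $\alpha_v$ is the enhanced state whose unique $1$-label on $\Gamma_0$ sits at $v$.

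To determine the coefficients in (a) and (b), I will apply $\ckh(\Sigma_3)$ to $\sigma_1$ step by step, ordering the positive twist-and-glues by a breadth-first traversal of a spanning tree $T \subseteq \Gamma_0$ rooted at $v_0$, with any remaining edges processed last (exactly one such edge in case (b), and none in case (a)). By the formula of Figure~\ref{fig_twistnglue_positive}, each tree edge converts a local $1 \otimes 1$ into a signed sum of the form required to build a $pqr$-chain upon full traversal of $T$; an induction on the height of $T$ then produces the $pqr$-chain with $a_v = \pm(-1)^{d(v,v_0)}$, establishing (a). In case (b), after processing $T$ we reach the same $pqr$-chain, and applying the twist-and-glue of the final edge $e \in \Gamma_0 \setminus T$ collapses the $pqr$-chain onto a single enhanced state with coefficient $\pm 2$, yielding (b). The main obstacle I anticipate is the sign bookkeeping in this inductive step, and in particular verifying that the resulting class is independent of the spanning tree, root, and processing order. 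This independence is guaranteed by Proposition~\ref{rkjc_proposition1} together with Theorem~\ref{bkh_theorem1}, since any two such choices of data correspond to surface diagrams of boundary-isotopic Seifert surfaces.
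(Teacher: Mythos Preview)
Your proposal is correct, and the spanning-tree induction you describe for the coefficients is exactly what the paper does. The difference is your preliminary grading step: the paper never appeals to quantum gradings but simply computes all three cases inductively, obtaining the $pqr$-chain for (a), then applying one extra (non-tree) edge to collapse it to $\pm 2\sigma_x$ for (b), and then a second extra edge to kill that state for (c). You instead first run the argument of Theorem~\ref{crkj_theorem1} on $\sigma_D$, using $v_- = n_+$ together with $b^1(\Gamma_0)=n_+-s_0+1$ to see immediately that no admissible enhanced state exists when $b^1\ge 2$ and that the state is forced when $b^1=1$. This gives (c) for free and explains in advance why the answers in (a) and (b) must have the shape they do; the paper's route is more hands-on but entirely self-contained.

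One small overreach: your last sentence claims independence of the spanning tree, root, and ordering via Theorem~\ref{bkh_theorem1} and Proposition~\ref{rkjc_proposition1}. Those results only give agreement of the induced maps up to sign and chain homotopy, hence equality of $\kj_{\Sigma_D}$, not of the cycle $\ckj_{\Sigma_D}$ itself. The paper sidesteps this by simply fixing an ordering (``situate $\Sigma_3$ so that\dots'') and computing; your argument does the same, so you can drop the independence claim without loss.
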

	\begin{proof}		
		Suppose first that $b^1(\Gamma_0) = 0$. Choose a vertex $v_0$ in $\Gamma_0$. Partially order the edge set $E_0 = \{e_1, e_2, \dots, e_k\}$ by their distance from $v_0$, and situate $\Sigma_3$ so the positive twist-and-glue moves are done in this order. The edge $e_1$ connects $v_0$ to some $v_1$, both of which are $1$-labeled. The positive twist-and-glue move associated to $e_1$ produces a $pqr$-chain on these vertices. We claim that successively applying the remaining maps induced by the edges $e_2, \dots, e_k$ will produce a $pqr$-chain on $\Gamma_0$. Inducting on the edge set, suppose the edge $e_i$ produces a $pqr$-chain on the vertices $v_0, \dots, v_i$ incident to one of $e_1, \dots, e_i$. The edge $e_{i+1}$ connects one of these vertices, say $v_j$, to a new, $1$-labeled vertex $v_{i+1}$. To see that the map induced by $e_{i+1}$ produces a $pqr$-chain on the vertices $v_1, \dots, v_i, v_{i+1}$, note that when $v_j$ is $x$-labeled, the positive twist-and-glue map incorporates $v_i$ into the new $pqr$-chain as another $x$-labeled loop, and when $v_j$ is $1$-labeled, the map produces the pair of unique summands in the new $pqr$-chain with $1$-labels on either $v_{i+1}$ or $v_j$. The sign of the unique summand with $1$-labeled $v_{i+1}$ is determined by the sign of $v_j$: the sign indicated in the positive twist-and-glue map (Figure \ref{fig_twistnglue_positive} \!) is either inherited by $v_{i+1}$ (when $v_j$ is positive) or is doubly-negated (when $v_j$ is negative). Consequently, the sign is determined by the parity of the distance of $v_{i+1}$ from $v_0$. It is negative when the distance is odd and positive when even. We conclude that $\ckj_{\Sigma_D}$ has the desired description.
		
		Suppose $b^1(\Gamma_0) = 1$. Choose a maximal tree $G < \Gamma_0$ and let $e$ be the unique remaining edge. Situate $\Sigma_3$ so that the positive twist-and-glue maps associated to edges of $G$ are done before $e$. Applying the previous paragraph to $G$, we produce a $pqr$-chain on $G$. Since $e$ connects two vertices of $G$, the map it induces is applied to a pair of loops already contained in the $pqr$-chain. When these loops are both $x$-labeled, the summand vanishes. Otherwise, one loop is $x$-labeled and the other $1$-labeled, and the result is the enhanced state with all $x$-labeled $\Gamma_0$. There are exatly two summands in the $pqr$-chain where these loops are not both $x$-labeled, so the result of applying $e$ is twice the all $x$-labeled $\Gamma_0$. The sign is the product of the signs of these summands.
		
		When $b^1(\Gamma_0) \geq 2$, apply the previous paragraph and record an additional edge that is not in $G$. Applying the positive twist-and-glue map associated to this edge will kill the enhanced state we have just produced.
	\end{proof}
	
	\begin{remark}
		In the case that $\Gamma_0$ is not connected, the theorem may be individually applied to each component; the positive twist-and-glue moves associated to different components act on labels for distinct Seifert circles (if they shared a Seifert circle, the two components would share a vertex).
	\end{remark}
	
	\subsection{{Nontriviality of Relative Khovanov-Jacobsson Classes}} \label{crkjc_nontriviality}
	In the following sections, we will use the nontriviality of relative Khovanov-Jacobsson classes to obstruct certain properties of surfaces bounding a link. It is then important to establish conditions for determining when cycles in the Khovanov chain complex represent nontrivial homology classes. Conditions specific to enhanced state cycles are discussed in \cite{elliott09_1} and \cite{elliott09_2}. Unfortunately, these conditions are not exhaustive. Moreover, relative Khovanov-Jacobsson cycles are often linear combinations of enhanced states, with each term not necessarily constituting a cycle. Further development of these conditions would be promising for future applications of relative Khovanov-Jacobsson classes.
	
	One method that has proven useful for determine the nontriviality of some cycles is the following: if it is difficult to determine whether the relative Khovanov-Jacobsson cycle of a link cobordism $\Sigma\: \emptyset \to L$ is nontrivial, we can extend $\Sigma$ by a second link cobordism $\Sigma'\:L \to L'$ and hope that the nontriviality of $\kj_{\Sigma' \circ \Sigma}$ is simpler to determine in $\kh(L')$. Because $\kj_{\Sigma' \circ \Sigma} = \kh(\Sigma')(\kj_\Sigma)$ and $\kh(\Sigma')$ is a homomorphism, a nontrivial $\kj_{\Sigma' \circ \Sigma}$ necessitates a nontrivial $\kj_\Sigma$. 
	
	One link cobordism $L \to L'$ we will use is called a \textit{trim cobordism}. This link cobordism removes a single crossing from the diagram for $L$, thereby reducing the complexity of the Khovanov homology group in which nontriviality is being determined. A crossing is removed by attaching a $1$-handle and undoing the resulting kink with a Reidemeister I move. This is illustrated for a right-handed crossing in Figure \ref{fig_trim_positive} and for a left-handed crossing in Figure \ref{fig_trim_negative}. The $1$-handle can be attached on either the right or left of the crossing; the computations are symmetric, so we have only shown the left-sided attachment. The map induced by a trim cobordism is called a \textit{trim map}.
	
	\begin{remark}
		The reader may be alarmed that the $1$-handle used to perform a trim cobordism may produce a nonorientable link cobordism. Because trim maps are only used for determining nontriviality, the orientability of the link cobordism $L \to L'$ is unimportant: nonoriented Morse saddles still induce chain maps. A choice of orientation for the link $L'$ simply determines the grading of the chain map, and therefore, will not effect the nontriviality of the cycle.
	\end{remark}
	
	\begin{remark}
		Trim maps \textit{regularly} produce cycles representing the trivial homology class. Only clever and careful use of trim maps provide useful insights!
	\end{remark}
	
\end{section}

%
%

\begin{section}{{Obstructing Sliceness of Knots}} \label{osk}
	
	\subsection{{Techniques for Obstructing Sliceness}} \label{osk_slice}
	In certain cases, relative Khovanov-Jacobsson classes can be used to determine the sliceness of a knot. In particular, by implementing the characterization of absolute Khovanov-Jacobsson classes in Theorem \ref{tanakarasmussen} \!, we obtain the following result.
	
	\begin{theorem} \label{osk1}
		For a slice knot $K$, the relative Khovanov-Jacobsson class of a link cobordism $\Sigma\: \emptyset \to K$ is nontrivial if $g(\Sigma) \leq 1$. In particular, slices of a slice knot have nontrivial relative Khovanov-Jacobsson classes.
	\end{theorem}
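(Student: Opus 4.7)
The plan is to reduce the claim to the absolute Khovanov-Jacobsson calculation of Rasmussen and Tanaka (Theorem~\ref{tanakarasmussen}) via the functoriality of $\kh$ under composition. Since $K$ is slice, fix a smooth, properly embedded disk $D \subset B^4$ with $\partial D = K$, regarded as a link cobordism $D \colon K \to \emptyset$ of genus $0$. Given $\Sigma \colon \emptyset \to K$ with $g(\Sigma) \leq 1$, we will construct a second cobordism $\Sigma' \colon K \to \emptyset$ so that the closed surface $\Sigma' \circ \Sigma$ is connected of genus exactly $1$. Theorem~\ref{tanakarasmussen} will then give $\kj_{\Sigma' \circ \Sigma} = 2$, and the functoriality identity $\kh(\Sigma' \circ \Sigma) = \kh(\Sigma') \circ \kh(\Sigma)$ (stated in the remark following Theorem~\ref{bkh_theorem1}) will force $\kj_\Sigma$ to be nonzero: a homomorphism sends $0$ to $0$, so a nonzero image requires a nonzero input.

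The construction of $\Sigma'$ splits on the genus of $\Sigma$. When $g(\Sigma) = 1$, take $\Sigma' = D$ itself, so that $\Sigma' \circ \Sigma$ is a connected closed surface of genus $0 + 1 = 1$; genus additivity is the Euler characteristic computation $\chi(\Sigma' \circ \Sigma) = \chi(\Sigma) + \chi(\Sigma')$, in which the gluing circle $K$ contributes zero. When $g(\Sigma) = 0$, obtain $\Sigma'$ from $D$ by attaching a single unknotted trivial $1$-handle in the interior of $B^4$, equivalently by boundary-connect-summing $D$ with an unknotted torus in $B^4$; this produces a properly embedded genus-$1$ cobordism $K \to \emptyset$ whose composition with $\Sigma$ is again closed of genus $1$. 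The second sentence of the theorem, about slices of $K$, is exactly the special case $g(\Sigma) = 0$ applied to the ``identity'' slice $\Sigma = D$, or to any other slice of $K$.

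The obstacles are largely cosmetic and lie in bookkeeping: the functoriality identity above holds only up to sign, and $\kj$ is itself only defined up to sign, but since we only require nonvanishing, these ambiguities do not affect the conclusion. Two standard technical points worth checking are that the unknotted $1$-handle in the $g(\Sigma) = 0$ case can indeed be added smoothly inside $B^4$ without disturbing $\partial D = K$, and that the slice disk $D$, originally thought of as smoothly embedded in $B^4$, can be realigned as a link cobordism in $\R^3 \times [0,1]$ in the sense of Section~\ref{bkh_maps} so that the composition $\Sigma' \circ \Sigma$ is a legitimate link cobordism $\emptyset \to \emptyset$ to which Theorem~\ref{tanakarasmussen} applies.
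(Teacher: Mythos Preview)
Your proof is correct and essentially identical to the paper's: both cap $\Sigma$ with a cobordism $\Sigma'\colon K \to \emptyset$ chosen so that $\Sigma' \circ \Sigma$ is closed of genus~$1$, then invoke Theorem~\ref{tanakarasmussen} together with functoriality to conclude $\kj_\Sigma \neq 0$. The only cosmetic difference is that in the $g(\Sigma)=0$ case the paper takes $\Sigma' = -(\Sigma \# T^2)$ (using the mirror of $\Sigma$ itself, stabilized) rather than a separately fixed slice disk with a handle attached; since $\Sigma$ is already a slice disk in that case, the two choices play the same role.
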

	\begin{proof}
		First, suppose $g(\Sigma) = 0$. Produce an oriented, genus $1$, closed surface $T = -(\Sigma \# T^2) \circ \Sigma$. By the characterization of absolute Khovanov-Jacobsson classes, $\kh(T)(1) = 2$. Therefore,
			$$2 = \kh(T)(1) = \big(\kh(-(\Sigma \# T^2)) \circ \kh(\Sigma)\big)(1)= \kh(-\Sigma \# T^2)(\kj_\Sigma).$$
		A trivial $\kj_\Sigma$ kills the right-most term, producing a contradiction. The proof with $g(\Sigma) = 1$ is done similarly; compose $\Sigma$ with a slice of $K$ to yield an identical calculation.
	\end{proof}

\clearpage

\begin{figure}[!ht]
	\includegraphics[scale=.4]{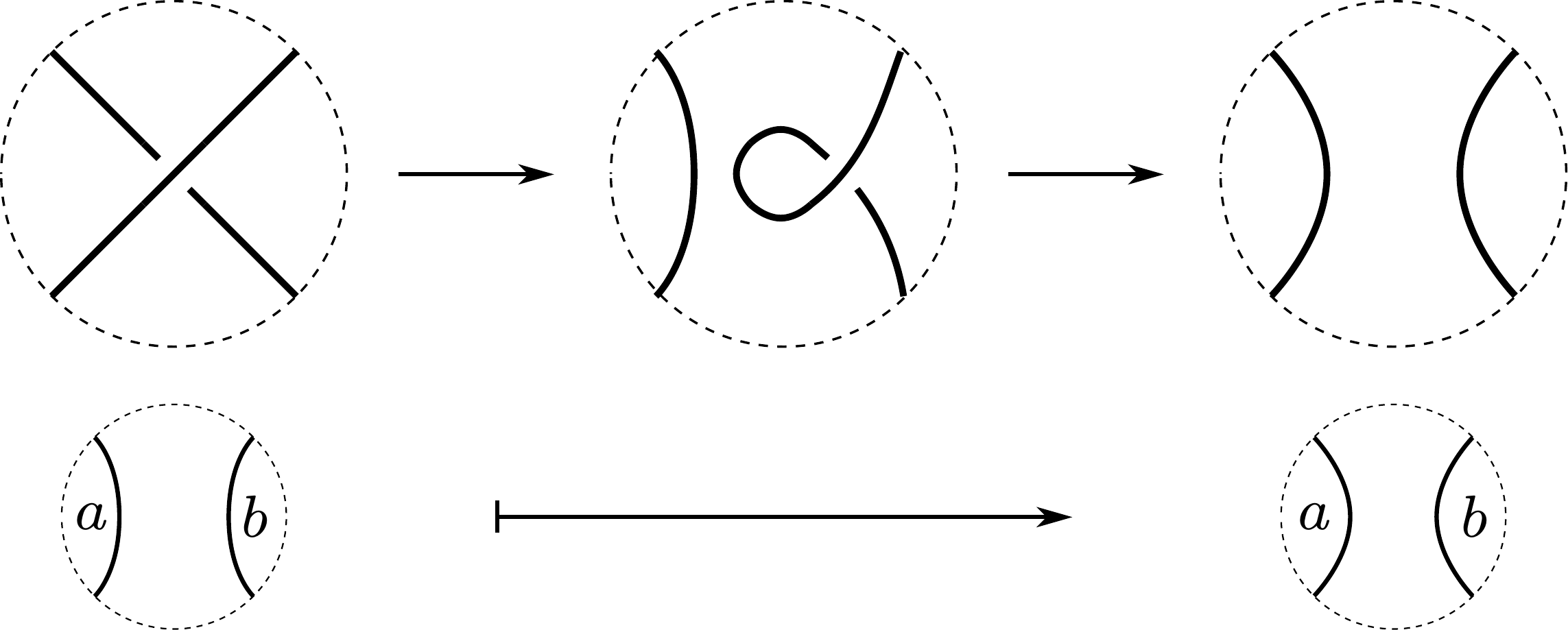}
	\caption{The left trim map on a right-handed crossing.}
	\label{fig_trim_positive}
\end{figure}

\begin{figure}[!ht]
	\includegraphics[scale=.4]{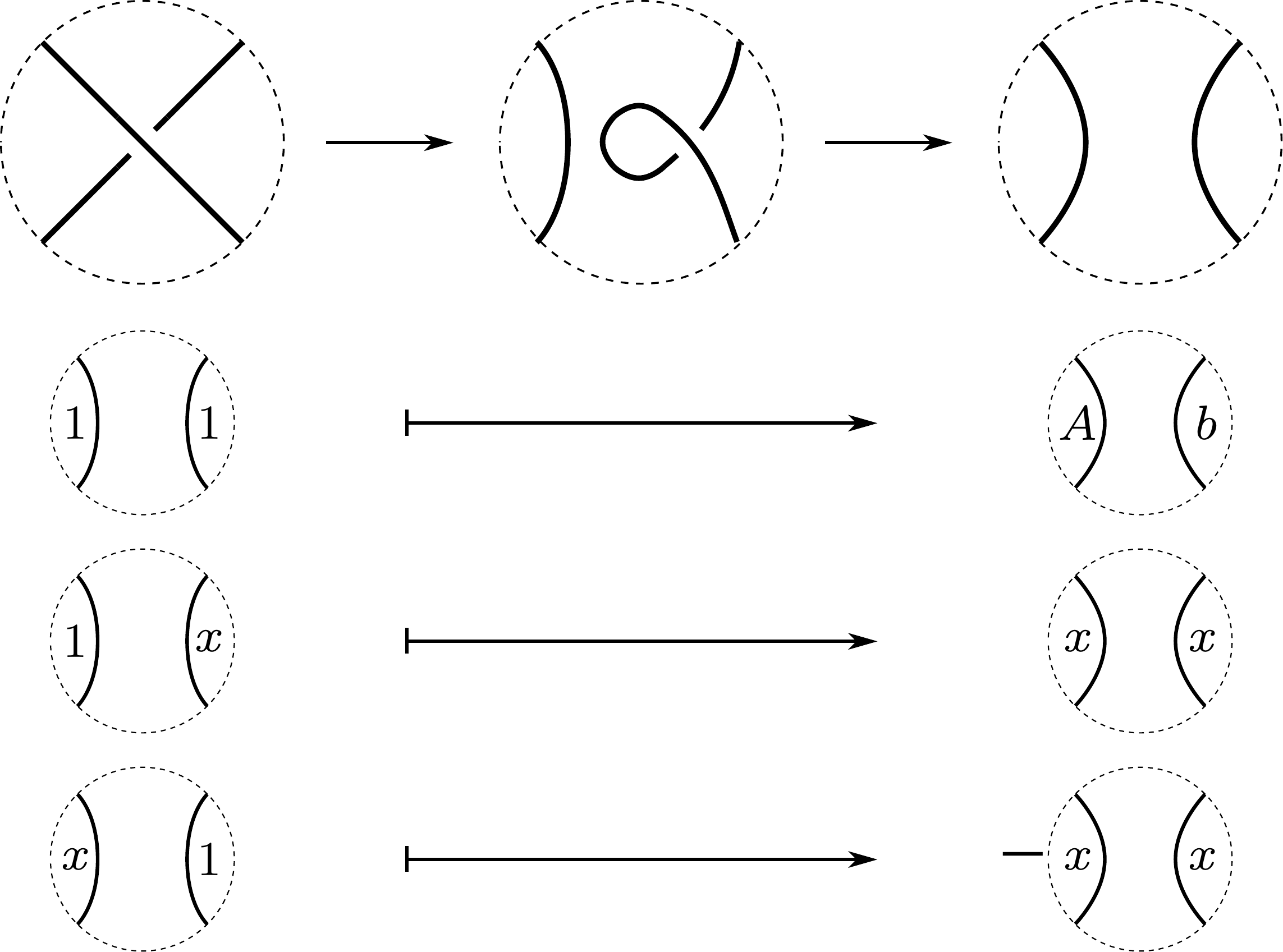}
	\caption{The left trim map on a left-handed crossing.}
	\label{fig_trim_negative}
\end{figure}
	
	\subsection{{Obstructing Sliceness of Pretzel Knots}} \label{osk_pretzel}
	Here, we use relative Khovanov-Jacobsson classes to obstruct the sliceness of certain three stranded pretzel knots. The sliceness of all odd three-stranded pretzel knots was determined in \cite{greenejabuka11}, and for the family we consider, the signature provides a more direct obstruction to sliceness. Nevertheless, this result indicates the potential for relative Khovanov-Jacobsson classes to obstruct sliceness more generally.
	
	\begin{theorem} \label{osk_theorem1}
		For $p,q,r$ odd and of the same sign, $P(p,q,r)$ is not slice.
	\end{theorem}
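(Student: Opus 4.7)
The plan is to apply the contrapositive of Theorem \ref{osk1}: I will exhibit a cobordism $\Sigma\: \emptyset \to P(p,q,r)$ of genus at most $1$ whose relative Khovanov-Jacobsson class vanishes. Since mirroring preserves sliceness and sends $P(p,q,r)$ to $P(-p,-q,-r)$, I may assume $p,q,r > 0$ and, after possibly mirroring once more, that in the standard pretzel diagram $D$ every crossing is positive.

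For $\Sigma$ I take the Seifert surface $\Sigma_D$ produced by Seifert's algorithm on $D$, pushed into $B^4$ as in Section \ref{crkjc_seifert}. Because every crossing of $D$ is positive, the orientation-induced smoothing $\sigma_D$ is obtained by $0$-smoothing every crossing, so $\Gamma_0$ coincides with the full state graph $\Gamma_{\sigma_D}$. I first count Seifert circles: in each twist region of $n$ crossings the strands are antiparallel (by the odd parity of $p,q,r$), and the $0$-smoothings produce $n-1$ small internal bigonal circles chained between the two outer arcs of $D$, which themselves become two additional Seifert circles. Thus $\sigma_D$ has $p+q+r-1$ circles, giving $\chi(\Sigma_D) = -1$ and $g(\Sigma_D) = 1$.

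With this picture in hand, $\Gamma_0$ is the generalized theta graph: two ``pole'' vertices (the outer Seifert circles) joined by three parallel paths of edge-lengths $p$, $q$, $r$ running through the internal vertices of the three twist regions. It is connected with $|V| = p+q+r-1$ and $|E| = p+q+r$, so $b^1(\Gamma_0) = |E| - |V| + 1 = 2$. Theorem \ref{crkj_theorem2}(c) then gives $\ckj_{\Sigma_D} = 0$, hence $\kj_{\Sigma_D} = 0$; since $g(\Sigma_D) = 1$, Theorem \ref{osk1} forces $P(p,q,r)$ to be non-slice.

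The main obstacle is the local analysis of a twist region under Seifert's algorithm: one must verify carefully that $n$ positive crossings between antiparallel strands really do yield a length-$n$ path in $\Gamma_0$ with $n-1$ internal bigonal vertices, and that these chains glue across the three twist regions to produce exactly the theta-graph structure claimed. Once this local picture is in place, the remainder reduces to Euler-characteristic and Betti-number bookkeeping, followed by a direct invocation of Theorems \ref{crkj_theorem2} and \ref{osk1}.
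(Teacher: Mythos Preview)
Your proof is correct and follows essentially the same route as the paper's: reduce by mirroring to a diagram with all positive crossings (the paper phrases this as taking $p,q,r$ all negative, which in its convention achieves the same thing, so your ``mirror once more'' step is exactly this reduction), identify $\Gamma_0$ with the full Seifert state graph, compute $b^1(\Gamma_0)=2$, and apply Theorems~\ref{crkj_theorem2}(c) and~\ref{osk1}. Your write-up is actually more explicit than the paper's, which reads $b^1(\Gamma_0)=2$ off a figure and leaves the genus-$1$ verification implicit, whereas you supply the Seifert-circle count and the theta-graph description of $\Gamma_0$ directly.
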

	\begin{proof}
		A knot is slice if and only if its mirror is slice; since the mirror of $P(p,q,r)$ is $P(-p,-q,-r)$, it suffices to prove the case where $p$, $q$, and $r$ are all negative. We will show that the Seifert surface $\Sigma_D$ associated to the standard diagram $D$ in Figure \ref{fig_slice_obstruction} has trivial relative Khovanov-Jacobsson, whereby Theorem \ref{osk1} implies $P(p,q,r)$ is not slice. To show that $\Sigma_D$ has trivial relative Khovanov-
		
		\begin{figure}[!ht]
			\includegraphics[scale=.4]{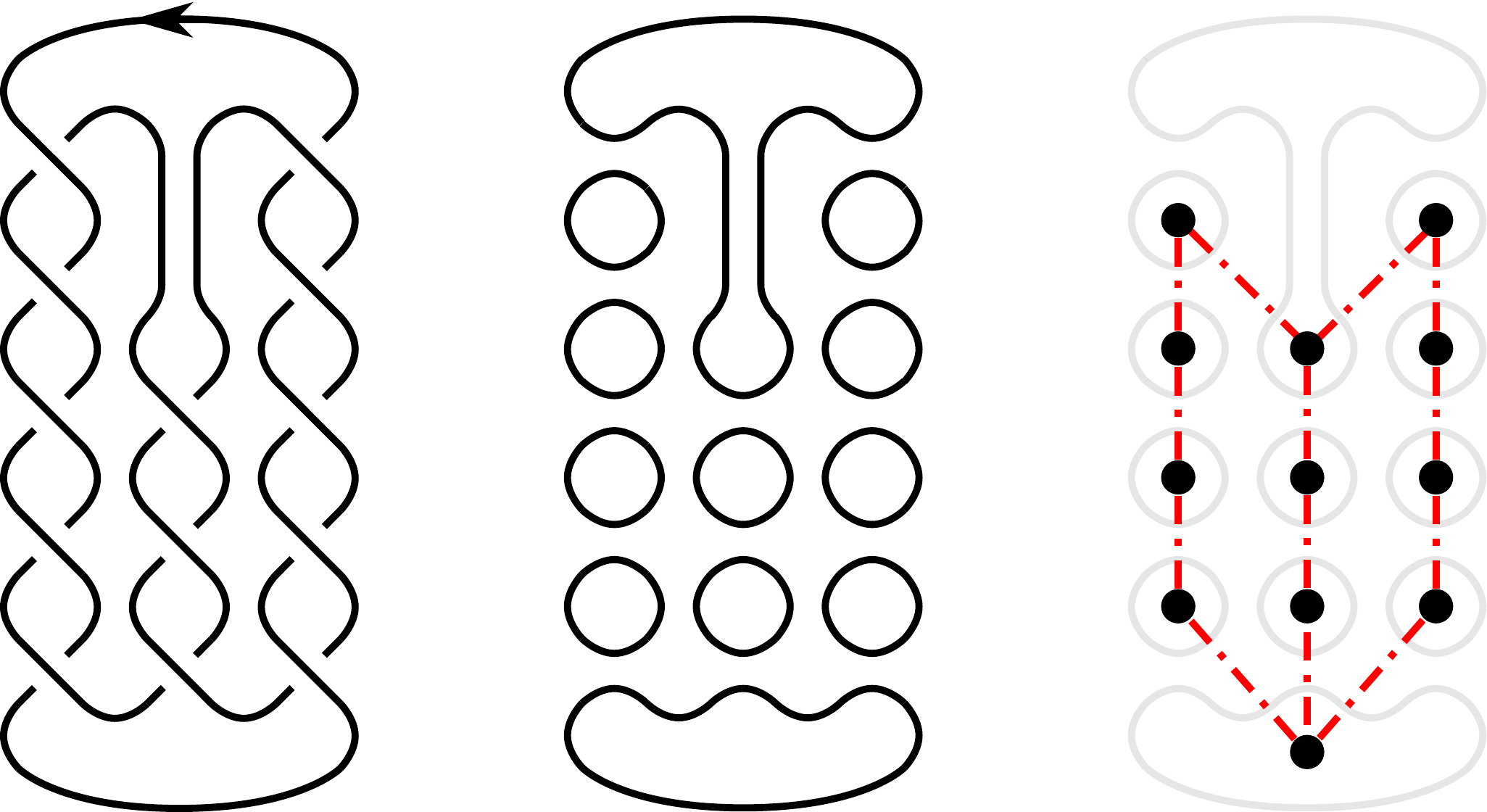}
			\caption{An oriented $P(-5,-3,-5)$; the orientation-induced smoothing $\sigma_D$; the subgraph $\Gamma_0$ of the state graph $\Gamma_{\sigma_D}$, having first Betti number $2$.}
			\label{fig_slice_obstruction}
		\end{figure}
		
		\noindent Jacobsson class, we apply Theorem \ref{crkj_theorem2} \!, which characterizes the relative Khovanov-Jacobsson cycles for Seifert surfaces according to the Betti number of the subgraph $\Gamma_0$ of their state graph $\Gamma_{\sigma_D}$. Either orientation induces the $0$-state of $D$ whose subgraph $\Gamma_0 = \Gamma_{\sigma_D}$ has first Betti number $2$. Thus, the relative Khovanov-Jacobsson cycle for $\Sigma_D$ is trivial. We conclude $P(p,q,r)$ is not slice.
	\end{proof}

	\begin{remark}
		In an attempt to reprove the result of \cite{greenejabuka11}, we could consider the case where one parameter $q$ is positive. In this case, $b^1(\Gamma_{\sigma_D}) = 1$ so $\kj_{\Sigma}$ is twice the enhanced state having $x$-labels on the vertices of $\Gamma_0$ and $1$-labels otherwise. It is unclear if this enhanced state represents a nontrivial Khovanov homology class. On the other hand, $\kj_\Sigma$ is nontrivial if $q = -p$ or $q = -r$ because $P(p,-p,r)$ and $P(p,-r,r)$ are both slice. We suspect that this is the case for all $P(p,q,r)$ with positive $q$ because the graph structure of $\Gamma_0$ is unchanged by the value of the parameters.
	\end{remark}

	\begin{remark}
		Odd parameters for a pretzel link always produce a knot. Using one even parameter will also be a knot. Unfortunately, in this case, the Seifert surface rarely has Euler characteristic $0$ or $-1$, so the above argument is not applicable to this situation.
	\end{remark}

	
\end{section}

%
%

\begin{section}{{Obstructing Boundary-Preserving Isotopy Classes}} \label{obpic}
	
	\subsection{{Techniques for Obstructing Boundary-Preserving Isotopy Classes}} \label{obpic_techniques}
	Recall that relative Khovanov-Jacobsson classes are invariants of the boundary-preserving isotopy class of a link cobordism: two link cobordisms $\Sigma_{0,1}\:\emptyset \to L$ whose relative Khovanov-Jacobsson classes are distinct, $\kj_{\Sigma_0} \neq \kj_{\Sigma_1}$, are not isotopic relative to their boundary. To obstruct this isotopy, we show
		$$\mathcal{C}(\Sigma_0, \Sigma_1) := \pm\ckj_{\Sigma_0} \pm \ckj_{\Sigma_1}$$
	represent nontrivial Khonvanov homology classes.
	
	\subsection{{Local Knottedness}} \label{obpic_local} 
	Given a link cobordism $\Sigma\: \emptyset \to L$ and a knotted $2$-sphere $S$, we say $\Sigma \# S$ is \textit{locally knotted}. This operation generally changes the boundary-preserving isotopy class of $\Sigma$, and as there is a plethora of knotted $2$-spheres with which we can perform this operation, the study of boundary-preserving isotopy classes is more rewarding when this operation is omitted. For a knot $K$, let $\bar{\Sigma(K)}$ denote the set of equivalence classes of surfaces bounding $K$ under the equivalence relation generated by boundary-preserving isotopy and connect summing with a knotted $2$-sphere. For a link cobordism $\Sigma\: \emptyset \to K$ we let $\Sigma(K)$ denote its equivalence class in $\bar{\Sigma(K)}$.
	
	Recent work has drawn attention toward understanding $\bar{\Sigma(K)}$ for classes of slice knots; see \cite{juhaszzemke20}, \cite{conwaypowell20}, and \cite{millerpowell20}. The latter of these posed the question: \textit{can we determine the value of $|\bar{\Sigma(K)}|$ for some nontrivial knot $K$, or at least whether $|\bar{\Sigma(K)}| < \infty$}. The following Theorem and Corollary indicate that relative Khovanov-Jacobsson classes may provide some aid in calculating lower bounds for this value.
	
	\begin{theorem}\label{obpic_theorem1}
		Relative Khovanov-Jacobsson classes do not detect local knottedness.
	\end{theorem}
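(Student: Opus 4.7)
The plan is to express $\Sigma \# S$ as the composition $F_S \circ \Sigma$, where $F_S \: L \to L$ is the ``sphere-insertion'' cobordism obtained from the identity cylinder $L \times I$ by connect summing with $S$ at an interior point. Functoriality of $\kh$ gives $\kj_{\Sigma \# S} = \kh(F_S)(\kj_\Sigma)$, so the theorem reduces to showing $\kh(F_S) = \pm \textnormal{id}$ as an endomorphism of $\kh(L)$.

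I describe $F_S$ by a movie whose nontrivial portion is confined to a narrow time slab: birth a small unknot $U$ near a chosen component $L_0$ of $L$, saddle $U$ into $L_0$ (forming the tube to $S$), run through the interior moves realizing the knotted isotopy class of $S$, saddle off a small unknot $U'$, and kill $U'$. The Frobenius identities $m \circ (\textnormal{id} \otimes \iota) = \textnormal{id}$ and $(\textnormal{id} \otimes \epsilon) \circ \Delta = \textnormal{id}$ for the Khovanov Frobenius algebra $V = \kh(\text{unknot})$ let the outermost birth-saddle and saddle-death pairs telescope, reducing $\kh(F_S)$ to an endomorphism $\phi \: V \to V$ acting on the $L_0$-factor alone (extended by identity to the other components). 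Since $\chi(F_S) = 0$, the map $\phi$ is $(0,0)$-graded, hence diagonal on $V = \Z\langle 1, x \rangle$ with integer eigenvalues $a, b$.

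To pin down the eigenvalues, close $F_S$ by pre- and post-composing with genus-one closure surfaces. For example, composing with a flat cup $\emptyset \to U$ on one side and a torus-with-hole cap $U \to \emptyset$ on the other produces a closed genus-one surface; by Theorem \ref{tanakarasmussen} \! its Khovanov-Jacobsson class is $\pm 2$, and a direct calculation identifies this value with $\pm 2 a$, forcing $a = \pm 1$. A dual closure (swapping the roles of flat and torus pieces) pins $b = \pm 1$, and a further closure mixing the two gradings forces $a = b$. We conclude $\phi = \pm \textnormal{id}$, hence $\kj_{\Sigma \# S} = \pm \kj_\Sigma$.

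The main obstacle is handling the interior moves of the sphere insertion when $S$ is nontrivially knotted: every knotted $2$-sphere has at least two saddle critical points in its movie, and the telescoping above must accommodate them. These extra saddles act on the merged component and on auxiliary circles born and killed during the interior movie, and the Frobenius telescoping can be iterated to absorb them---or, alternatively, one may invoke Jacobsson's invariance (Theorem \ref{bkh_theorem1} \!) to rearrange $S$ into a normal-form movie in which the Frobenius structure is manifest.
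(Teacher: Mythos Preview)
Your decomposition $\Sigma \# S \simeq F_S \circ \Sigma$ is fine, and the target $\kh(F_S) = \pm\,\textnormal{id}$ is the right thing to prove, but the reduction step has a genuine gap. After telescoping the outer birth--saddle and saddle--death pairs, what remains is the map induced by the \emph{interior} cobordism, and that is an endomorphism of $\kh(L)$, not of $V$. Your assertion that it has the form ``$\phi \: V \to V$ on the $L_0$-factor, extended by identity'' presupposes a splitting $\kh(L) \cong V \otimes \kh(L \setminus L_0)$ that simply does not exist unless $L_0$ is split from the rest of $L$. Without such a splitting there is no ``$L_0$-factor'' on which your closure surfaces (cup and punctured torus to the \emph{unknot}) can be attached, so the eigenvalue computation in your third paragraph has nothing to act on. Your final paragraph correctly flags that the interior saddles of a nontrivial $S$ do not cancel via Frobenius axioms alone, but the remedies offered there (``iterate the telescoping'', ``normal-form movie'') remain gestures rather than arguments.

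The paper's proof sidesteps all of this with a different decomposition: write $\Sigma = (\Sigma \setminus E) \circ E$ and $\Sigma \# S = (\Sigma \setminus E) \circ (S \setminus F)$, where $E \subset \Sigma$ and $F \subset S$ are the disks removed in the connect sum. Both $E$ and $S \setminus F$ are connected genus-$0$ surfaces bounding the unknot, so Theorem~\ref{crkj_theorem1}(a) gives $\kj_E = \kj_{S \setminus F}$ immediately, and functoriality finishes in one line. If you prefer to keep your own decomposition, the clean fix is to factor $F_S$ as $(\textnormal{saddle}) \circ \big(\textnormal{id}_L \sqcup (S \setminus F)\big)$ and use the honest tensor splitting $\kh(L \sqcup U) \cong \kh(L) \otimes V$ (valid because $U$ \emph{is} split from $L$); this yields $\kh(F_S)(\alpha) = \alpha \cdot \kj_{S \setminus F}$, reducing the problem to showing $\kj_{S \setminus F} = \pm 1 \in V$. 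Your closure computations with Theorem~\ref{tanakarasmussen} do establish exactly this---but that is precisely the $n=1$ case of Theorem~\ref{crkj_theorem1}(a), proved earlier in the paper by the very same capping argument. So the repaired version of your proof works, but it re-derives a result already in hand rather than citing it.
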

	\begin{proof}
		For a link cobordism $\Sigma\:\emptyset \to L$ and $2$-knot $S$, we wish to show $\kj_\Sigma = \kj_{\Sigma \# S}$. Let $E$ and $F$ be the disks removed from $\Sigma$ and $S$, respectively, to form $\Sigma \# S$. This allows us to write $\Sigma \# S$ as a composition of cobordisms $(\Sigma \setminus E) \circ (S \setminus F)$. In forming the connect sum, the disk $E$ is replaced by $S \setminus F$. As both of these are genus $0$ surfaces bounding the unknot, Theorem \ref{crkj_theorem1} implies they induce the same map on Khovanov homology, whereby we have
		$$\kj_\Sigma = \kh_{(\Sigma \setminus E) \circ E}(1) = \big(\kh_{\Sigma \setminus E} \circ \kh_E\big)(1) = \big(\kh_{\Sigma \setminus E} \circ \kh_{S \setminus F}\big)(1) = \kh_{(\Sigma \setminus E) \circ (S \setminus F)}(1) = \kj_{\Sigma \# S}$$
		Therefore, local knottedness is not detected by relative Khovanov-Jacobsson classes.
	\end{proof}
	
	Although the statement of this theorem may sound like a shortcoming of relative Khovanov-Jacobsson classes, it actually gives them the power to distinguish classes in $\bar{\Sigma(K)}$. In particular, whenever a pair of surfaces bounding a knot $K$ have distinct relative Khovanov-Jacobsson classes, it must follow that the surfaces represent distinct classes in $\bar{\Sigma(K)}$, lest they contradict the above theorem. More succinctly, we have the following result.
	
	\begin{corollary}\label{obpic_corollary2}
		Relative Khovanov-Jacobsson classes are invariants of classes in $\bar{\Sigma(K)}$ for any $K$.
	\end{corollary}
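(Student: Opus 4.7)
The plan is to show that the relative Khovanov-Jacobsson class is preserved by each of the two generating moves of the equivalence relation defining $\bar{\Sigma(K)}$, and then to conclude by induction on the length of a sequence of such moves.

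First I would recall that by Proposition \ref{rkjc_proposition1} (or directly by Theorem \ref{bkh_theorem1}), the class $\kj_\Sigma$ is invariant under boundary-preserving isotopy of $\Sigma$. This handles the first generator. Second, I would invoke Theorem \ref{obpic_theorem1}, which shows $\kj_\Sigma = \kj_{\Sigma \# S}$ for any $2$-knot $S$. This handles the second generator.

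Concretely, suppose $\Sigma$ and $\Sigma'$ represent the same class in $\bar{\Sigma(K)}$. By definition of the equivalence relation, there is a finite sequence $\Sigma = \Sigma_0, \Sigma_1, \dots, \Sigma_n = \Sigma'$ in which each consecutive pair $\Sigma_i, \Sigma_{i+1}$ is related either by a boundary-preserving isotopy or by connect summing one of them with a knotted $2$-sphere. In the first case $\kj_{\Sigma_i} = \kj_{\Sigma_{i+1}}$ by the invariance recalled above, and in the second case $\kj_{\Sigma_i} = \kj_{\Sigma_{i+1}}$ by Theorem \ref{obpic_theorem1}. Chaining these equalities together gives $\kj_\Sigma = \kj_{\Sigma'}$, so $\kj$ is constant on the class $\Sigma(K)$ and hence well-defined on $\bar{\Sigma(K)}$.

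There is no real obstacle here; the corollary is essentially a formal consequence of the two invariance statements applied to the generators of the equivalence relation. The only minor thing to check is that the relation is genuinely generated by these two moves (a single step being either an isotopy or a local knotting), which is exactly how $\bar{\Sigma(K)}$ was defined in Section \ref{obpic_local}. Thus the induction is immediate and the proof is a short paragraph.
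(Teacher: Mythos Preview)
Your proposal is correct and matches the paper's approach: the paper treats this corollary as an immediate consequence of Theorem~\ref{obpic_theorem1} together with the already-established invariance under boundary-preserving isotopy, giving only the one-sentence justification in the paragraph preceding the corollary. Your version simply makes the induction over generating moves explicit, which is exactly the intended argument.
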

	
	\subsection{{Obstructing Equivalence of Slice Disks for $\boldsymbol{9_{46}}$}} \label{obpic_946}
	There are two slices $\Sigma_\ell$ and $\Sigma_r$ for the knot $9_{46}$, obtained by first performing either the left (pink) or right (orange) band move, indicated
	\begin{wrapfigure}[5]{r}{7em}
		\centering\vspace{-8pt}
		\hspace{-5pt}\includegraphics[scale=.4]{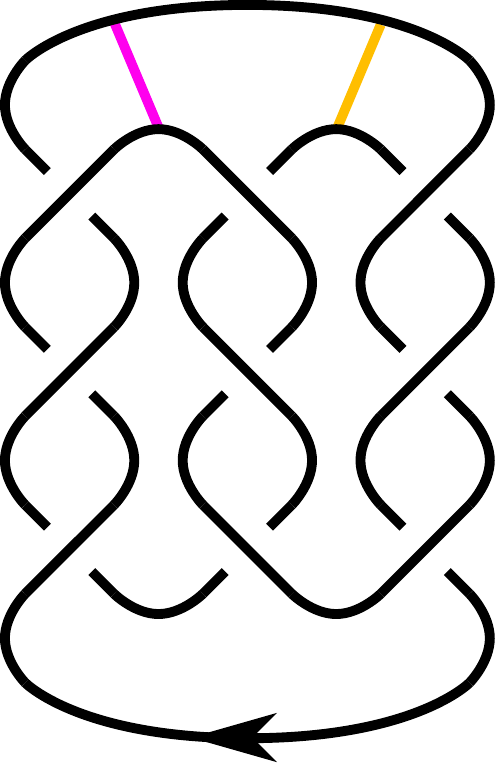}
	\end{wrapfigure}
	on the diagram to the right, and then undoing the resulting two-component unlink. It is known that these slices are not isotopic relative to their boundary. In what follows, we verify this fact by showing these slices have distinct relative Khovanov-Jacobsson classes. We go on to show that $\#_k(9_{46})$ has $2^k$ distinct slices, implying $|\bar{\Sigma\big(\#_k(9_{46})\big)}| \geq 2^k$. This fact was also established in \cite{millerpowell20} by examining kernels of inclusion induced maps on Alexander modules.
	
	\begin{theorem}\label{obpic_theorem3}
		The pretzel knot $P(3, -3,3) = 9_{46}$ has at least two distinct slices.
	\end{theorem}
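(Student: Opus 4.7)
The plan is to realize the two slice disks $\Sigma_\ell$ and $\Sigma_r$ as explicit movies $\emptyset \to 9_{46}$, compute their relative Khovanov-Jacobsson cycles, and exhibit a nontrivial obstruction $\mathcal{C}(\Sigma_\ell,\Sigma_r) = \pm\ckj_{\Sigma_\ell} \pm \ckj_{\Sigma_r} \in \kh(9_{46})$ in the sense of Section \ref{obpic_techniques}. Reversing the construction from the statement, each slice can be written as a movie that first births two disks (yielding the two-component unlink $U$) and then performs the pink (respectively orange) band move to arrive at $9_{46}$. By Theorem \ref{crkj_theorem1}(a), the pair of disks bounding $U$ induces the genus-zero $pqr$-chain $1\otimes x - x\otimes 1$ on the crossingless diagram of $U$, up to sign. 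Post-composing with the saddle map of the relevant band then yields an explicit chain element for each $\ckj_{\Sigma_\ell}$ and $\ckj_{\Sigma_r}$, supported on the state of $D_{9_{46}}$ obtained by smoothing along the chosen band. This is the concrete computation advertised in Figure \ref{fig_946left}.

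Once the two cycles are in hand, I would form $\mathcal{C}(\Sigma_\ell, \Sigma_r)$ and argue it represents a nontrivial homology class. Because the pink and orange bands sit on different crossings of the standard diagram, the two cycles $\ckj_{\Sigma_\ell}$ and $\ckj_{\Sigma_r}$ have disjoint supports as enhanced-state sums, so $\mathcal{C}$ is a nonzero chain element on the nose; the task is to rule out its being a boundary. Rather than attempt this directly in the $2^9$-generator complex for the standard diagram of $9_{46}$, I would post-compose with a carefully chosen trim cobordism $\Sigma'\:9_{46} \to L'$ as in Section \ref{crkjc_nontriviality}, selected so that the target diagram of $L'$ has small enough Khovanov homology for an explicit check. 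A good candidate is to trim crossings adjacent to one of the two bands so that $\ckh(\Sigma')(\ckj_{\Sigma_\ell})$ and $\ckh(\Sigma')(\ckj_{\Sigma_r})$ become recognizable enhanced-state cycles (e.g.\ one collapses to a standard generator while the other is killed), whereby $\ckh(\Sigma')(\mathcal{C})$ is manifestly nonzero in $\kh(L')$; since $\kh(\Sigma')$ is a homomorphism this forces $\mathcal{C} \neq 0$ in $\kh(9_{46})$. The rotational symmetry of the $9_{46}$ diagram exchanging the two bands lets one reuse most of the calculation for $\Sigma_\ell$ when handling $\Sigma_r$.

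The main obstacle is the nontriviality argument: the enhanced-state criteria of \cite{elliott09_1, elliott09_2} apply cleanly only to individual state cycles, whereas each $\ckj_\Sigma$ here is a sum of several enhanced states arising from the saddle of a $pqr$-chain. The art is in choosing the trim sequence so that, after trimming, the image of $\mathcal{C}$ is supported on a single enhanced state (or a sum of such whose nontriviality can be detected by one of the known criteria) on a diagram whose Khovanov homology is understood. Assuming this trim cobordism is engineered successfully, $\kj_{\Sigma_\ell} \neq \pm\kj_{\Sigma_r}$ in $\kh(9_{46})$, and Proposition \ref{rkjc_proposition1} together with the invariance of relative Khovanov-Jacobsson classes under boundary-preserving isotopy yields that $\Sigma_\ell$ and $\Sigma_r$ are not isotopic rel $\partial$, giving the two distinct slices.
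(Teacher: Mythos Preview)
Your proposal is correct and mirrors the paper's proof: compute $\ckj_{\Sigma_\ell}$ and $\ckj_{\Sigma_r}$ from explicit movies, observe their supports are disjoint, then apply three trim maps to one column of the diagram so that one cycle dies entirely while the other survives as the all-$1$ label of the $1$-state of the trimmed diagram, whose nontriviality follows from an extreme-homological-grading argument (as in \cite{elliott09_1} and \cite{swann10}). The only refinement is that the actual movies involve several Reidemeister moves, not just births and a saddle, so each cycle is spread across multiple \emph{states} rather than multiple labelings of a single state; this is exactly what makes the trims---which kill any term with a $1$-smoothing in the chosen column---effective at isolating a single surviving enhanced state.
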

	
	\begin{proof}
		A movie of the surface $\Sigma_r$ is given in Figure \ref{fig_946left} (found at the end of the paper), along with the relative Khovanov-Jacobsson cycle it produces. A symmetric calculation gives the relative Khovanov-Jacobsson cycle for $\Sigma_\ell$. Both cycles are illustrated in Figure \ref{fig_946cycles} \!. We claim that the up-to-sign differences of these cycles $\mathcal{C}(\Sigma_\ell, \Sigma_r)$ represent nontrivial homology classes, and therefore the boundary-preserving isotopy classes of $\Sigma_\ell$ and $\Sigma_r$ are distinguished by their relative Khovanov-Jacobsson classes.
		
		\begin{figure}[!ht]
			\includegraphics[scale=.4]{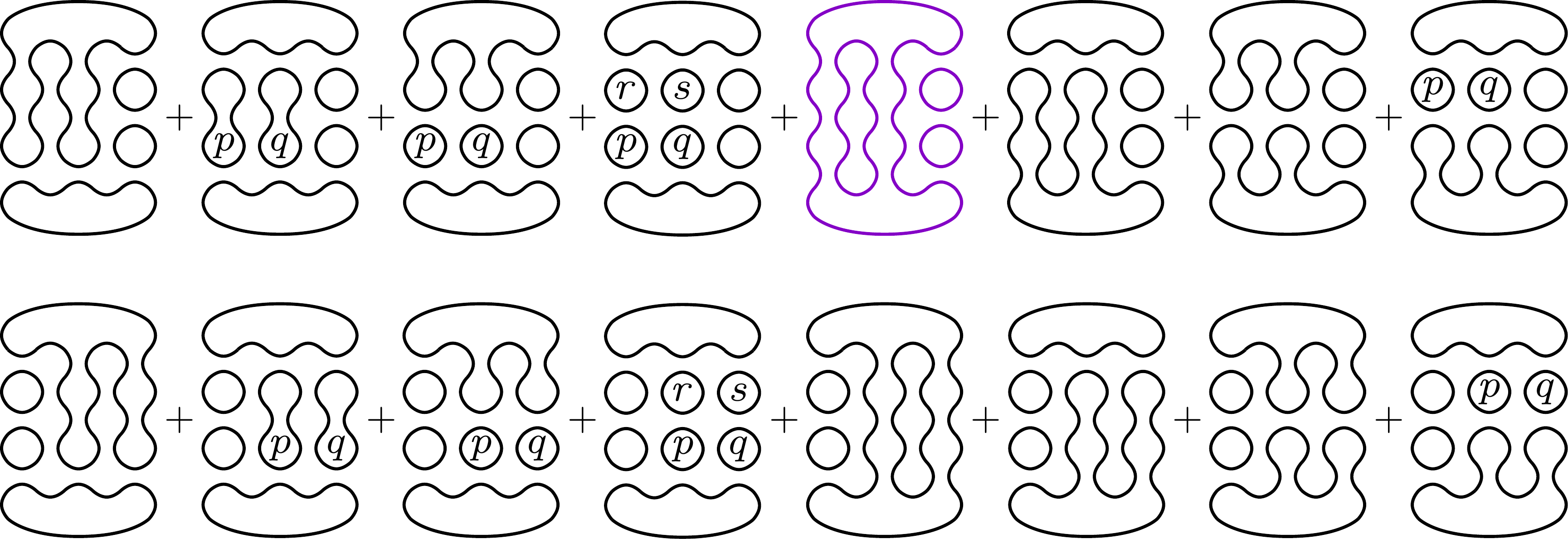}
			\caption{The relative Khovanov-Jacobsson cycles for $\Sigma_\ell$ (top) and $\Sigma_r$ (bottom).}
			\label{fig_946cycles}
		\end{figure}
	
		\begin{remark}
			Although the movie descriptions for these surfaces produce cycles belonging to Khovanov chain complexes associated to the same diagram of $9_{46}$, these chain complexes differ in their enumeration of the diagram's crossings. In retrospect, this will not be important, however, for the general well-being of the reader, we note that these complexes are related by a chain homotopy that only affects the signs of the enhanced states in $\ckj_{\Sigma_\ell}$ and $\ckj_{\Sigma_r}$. There are no common smoothings between these cycles, so these sign changes will not cause any unwanted cancellation.
		\end{remark}
		
		To see $\mathcal{C}(\Sigma_\ell, \Sigma_r)$ represents a nontrivial element in homology, apply three positive trim maps to the crossings in the left column of the diagram, killing any enhanced state having a $1$-smoothing in the left column. There is only one enhanced state whose left column contains only $0$-smoothings; we have highlighted this state (in purple) in $\ckj_{\Sigma_r}$. As a result, the three trim maps produce a single enhanced state. This enhanced state is the all $1$-label of the $1$-state of the trimmed diagram;
		
		\clearpage
		
		\begin{figure}[!ht]
			\includegraphics[scale=.4]{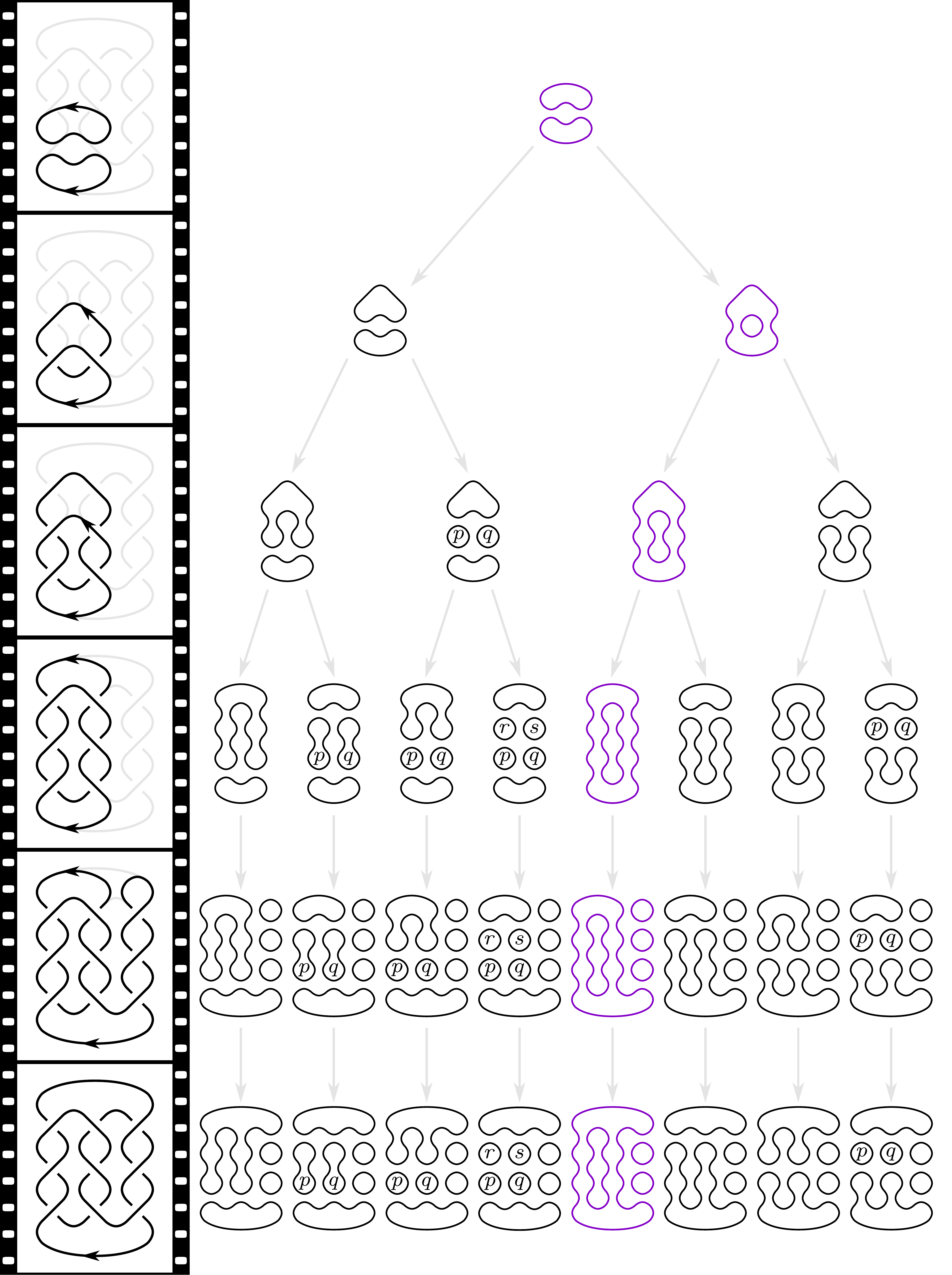}
			\caption{A movie description of the surface $\Sigma_r$ and the calculation of its relative Khovanov-Jacobsson cycle. Highlighted is the element which maintains a left column that is all $0$-smoothed.}
			\label{fig_946left}
		\end{figure}
	
		\clearpage
		
		\noindent an analysis on the differentials entering this (extreme) homological grading shows this enhanced state cannot be a boundary. Details for this analysis can be found in both \cite[Example 2.2]{elliott09_1} and \cite[Proposition 10]{swann10}.
		
		Unpackaging the proof: this enhanced state is the image of $\mathcal{C}(\Sigma_\ell, \Sigma_r)$ under a sequence of trim maps and represents a nontrivial class in the Khovanov homology associated to the trimmed diagram. Thus, the cycles $\mathcal{C}(\Sigma_\ell, \Sigma_r)$ must also represent nontrivial Khovanov homology classes, forcing their relative Khovanov-Jacobsson classes to be distinct. We may then conclude that the slices $\Sigma_\ell$ and $\Sigma_r$ are not isotopic relative to their boundary.
	\end{proof}
	
	\begin{corollary} \label{obpic_corollary4}
		For $n$ odd and $|n| > 1$, the pretzel knot $P(n,-n,n)$ has at least two distinct slices.
	\end{corollary}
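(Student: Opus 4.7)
The plan is to generalize the argument of Theorem \ref{obpic_theorem3} directly: construct two explicit slice disks $\Sigma_\ell$ and $\Sigma_r$ for $P(n,-n,n)$ via left and right band moves, compute their relative Khovanov-Jacobsson cycles, and use a column of positive trim maps to reduce $\mathcal{C}(\Sigma_\ell,\Sigma_r)$ to the all $1$-label of an extreme state whose nontriviality is already known.

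First I would reduce to $n \geq 3$ by mirroring: the mirror of $P(n,-n,n)$ is $P(-n,n,-n)$, so a pair of boundary-preserving non-isotopic slices for one yields the same for the other, because mirroring commutes with $\kh$ up to the grading-reversing duality and preserves distinctness of cycles. Fix an odd integer $n \geq 3$ and a standard diagram $D_n$ of $P(n,-n,n)$ with three twist columns of $n$ crossings each. As in the $9_{46}$ case, one can attach a band $b_\ell$ between the leftmost two strands and a band $b_r$ between the rightmost two strands so that the resulting diagram becomes a two-component unlink after an isotopy. Capping off the unlink with two disks produces slice disks $\Sigma_\ell, \Sigma_r \colon \emptyset \to P(n,-n,n)$ generalizing the two slices drawn for $9_{46}$.

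Next I would compute $\ckj_{\Sigma_\ell}$ and $\ckj_{\Sigma_r}$ stage-by-stage along the induced movie: two births, the saddle associated to the band, and a sequence of Reidemeister moves undoing the unlink. The key structural fact to extract from this computation is that the smoothings appearing in the support of $\ckj_{\Sigma_\ell}$ all have every crossing in the left column $1$-smoothed (because the left band forces those crossings to be resolved), while every smoothing in the support of $\ckj_{\Sigma_r}$ has at least one $0$-smoothing in the left column, and exactly one enhanced state in $\ckj_{\Sigma_r}$ has the entire left column $0$-smoothed. This is precisely the phenomenon highlighted in purple in Figure \ref{fig_946left} for $n=3$, and is the generalization of the no-common-smoothing remark after Theorem \ref{obpic_theorem3}.

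Given that, nontriviality of $\mathcal{C}(\Sigma_\ell,\Sigma_r) = \pm\ckj_{\Sigma_\ell} \pm \ckj_{\Sigma_r}$ is handled by the trim argument of Theorem \ref{obpic_theorem3}: apply $n$ positive trim maps to the $n$ crossings of the left column of $D_n$. These maps annihilate any enhanced state with a $1$-smoothing in the left column, which wipes out the entire $\ckj_{\Sigma_\ell}$ contribution and all but one enhanced state in the $\ckj_{\Sigma_r}$ contribution. After undoing the Reidemeister I kinks that the trim maps create, the surviving enhanced state becomes the all $1$-label on the $1$-state of the trimmed diagram. The homological-grading argument in \cite[Example 2.2]{elliott09_1} and \cite[Proposition 10]{swann10} applies verbatim to show this state is not a boundary. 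Consequently $\mathcal{C}(\Sigma_\ell,\Sigma_r)$ represents a nonzero Khovanov homology class, so $\kj_{\Sigma_\ell} \neq \kj_{\Sigma_r}$ and the two slices are not boundary-preserving isotopic.

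The main obstacle is the bookkeeping in the cycle computation: verifying, uniformly in $n$, that $\ckj_{\Sigma_r}$ contains a \emph{unique} enhanced state whose left column is entirely $0$-smoothed, and that this state reduces to the advertised extreme label after trimming. I expect this to follow from an induction on $n$ in which the extra Reidemeister I moves needed to unknot the post-band diagram each act locally on the right side of the cycle and leave the left column of each smoothing untouched, so the $n=3$ picture in Figure \ref{fig_946left} serves as the base case and genuinely propagates.
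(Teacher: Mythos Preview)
Your proposal is correct and follows essentially the same route as the paper: generalize the $9_{46}$ movies to $n$ crossings per column, track through the computation the unique enhanced state in $\ckj_{\Sigma_r}$ whose left column is entirely $0$-smoothed, apply $n$ positive trim maps to that column to isolate it, and identify the survivor as the all-$1$ label on the $1$-state of the trimmed diagram. The paper's version is terser and notes that the generalized movies require $n$-many Reidemeister I \emph{and} II moves (not just Reidemeister I as you suggest), and it does not bother with the mirroring reduction, but the strategy and the key observation are the same as yours.
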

	\begin{proof}
		Use a similar diagram with $n$-crossings in each column and alter the above movie descriptions to use $n$-many Reidemeister I and II moves. The relative Khovanov-Jacobsson chain cycles are more complex, however, throughout their calculation, we can follow a unique enhanced state having a left column that is all $0$-smoothed (just as we did in purple throughout Figure \ref{fig_946left} \!). Applying $n$-many positive trim moves to the left column of the diagram will kill all but this unique enhanced state, and, as before, this enhanced state will be the all $1$-label of the $1$-state of the trimmed diagram, which represents a nontrivial Khovanov homology class.
	\end{proof}
	
	\begin{corollary}\label{obpic_corollary5}
		The knot $\#_k (9_{46})$ has at least $2^k$-many distinct slices.
	\end{corollary}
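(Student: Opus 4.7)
The plan is to construct, for each $\epsilon = (\epsilon_1, \dots, \epsilon_k) \in \{\ell, r\}^k$, a slice $\Sigma_\epsilon$ of $\#_k(9_{46})$ as the boundary connected sum of $k$ slices of $9_{46}$, using $\Sigma_\ell$ in the $j$-th summand when $\epsilon_j = \ell$ and $\Sigma_r$ when $\epsilon_j = r$; I then show that the $2^k$ resulting relative Khovanov-Jacobsson classes are pairwise distinct. I fix a diagram of $\#_k(9_{46})$ given by $k$ side-by-side copies of the $9_{46}$ diagram from the proof of Theorem \ref{obpic_theorem3}, joined by narrow arcs, so that each $\Sigma_\epsilon$ inherits a natural movie performing the prescribed band move in each summand independently. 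By Corollary \ref{obpic_corollary2}, distinctness of the classes $\kj_{\Sigma_\epsilon}$ will imply distinctness in $\bar{\Sigma(\#_k(9_{46}))}$.

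To distinguish $\Sigma_\epsilon$ from $\Sigma_{\epsilon'}$ when $\epsilon \neq \epsilon'$, I pick a coordinate $i$ where they differ and assume WLOG that $\epsilon_i = r$ and $\epsilon'_i = \ell$. Mirroring the strategy from Theorem \ref{obpic_theorem3}, I apply three positive trim maps to the crossings in the left column of the $i$-th copy. From the $k=1$ analysis, $\ckj_{\Sigma_r}$ contains exactly one enhanced state (the purple one of Figure \ref{fig_946left}) whose left column is entirely $0$-smoothed, while $\ckj_{\Sigma_\ell}$ contains none. Since the boundary connected sum realizes $\ckj_{\Sigma_\epsilon}$ via a merge saddle applied to the $k$-fold product of the summand cycles, these trim maps annihilate $\ckj_{\Sigma_{\epsilon'}}$ entirely while leaving a distinguished surviving term from $\ckj_{\Sigma_\epsilon}$.

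To establish nontriviality of this surviving term in the partially trimmed Khovanov homology, I iterate the trimming in each remaining copy: for $j \neq i$ with $\epsilon_j = r$ apply the same trims to the $j$-th copy's left column, and for $j \neq i$ with $\epsilon_j = \ell$ apply the mirror trims to its right column (using the left-right symmetry of the $9_{46}$ diagram that exchanges $\Sigma_\ell$ with $\Sigma_r$). After this cascade of $3k$ trim maps, only the all $1$-label of the $1$-state of the fully trimmed diagram survives; its nontriviality follows from the extremal-grading argument of Theorem \ref{obpic_theorem3} (see also \cite[Example 2.2]{elliott09_1} and \cite[Proposition 10]{swann10}). Because trim maps descend to homomorphisms on homology, nontriviality at the end of the cascade pulls back to nontriviality of $\mathcal{C}(\Sigma_\epsilon, \Sigma_{\epsilon'})$, forcing $\kj_{\Sigma_\epsilon} \neq \kj_{\Sigma_{\epsilon'}}$.

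The main obstacle is justifying the cascade at the chain level: one must confirm that the merge saddles used to form $\#_k(9_{46})$ interact with the trim maps in the separate copies as expected, so that the highlighted enhanced state contributed by each summand is preserved under the trims performed in the other summands. Since each merge saddle is supported on an arc between adjacent copies and each trim map is supported near a single crossing, these operations should commute up to routine bookkeeping, reducing the interaction to a careful tensor-product calculation among local contributions. The remaining subtlety is to verify the left-right symmetry asserted for $\ckj_{\Sigma_\ell}$ (that it contains a unique enhanced state with all-$0$-smoothed right column), which should follow from the evident mirror symmetry of the diagram swapping the pink and orange band moves.
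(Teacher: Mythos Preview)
Your proposal is correct and follows essentially the same route as the paper. The only cosmetic difference is that the paper arranges the $k$ copies of $9_{46}$ as the sails of a \emph{windmill} diagram (joined at a central hub) rather than in a linear side-by-side strip; in both setups the distinguishing argument is exactly the one you give---trim every summand according to the choices made by $\Sigma_\epsilon$, so that $\ckj_{\Sigma_{\epsilon'}}$ dies at the summand where $\epsilon$ and $\epsilon'$ disagree, while $\ckj_{\Sigma_\epsilon}$ survives as the all $1$-label of the $1$-state of the trimmed diagram.
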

	
	\begin{figure}[!ht]
		\includegraphics[scale=.375]{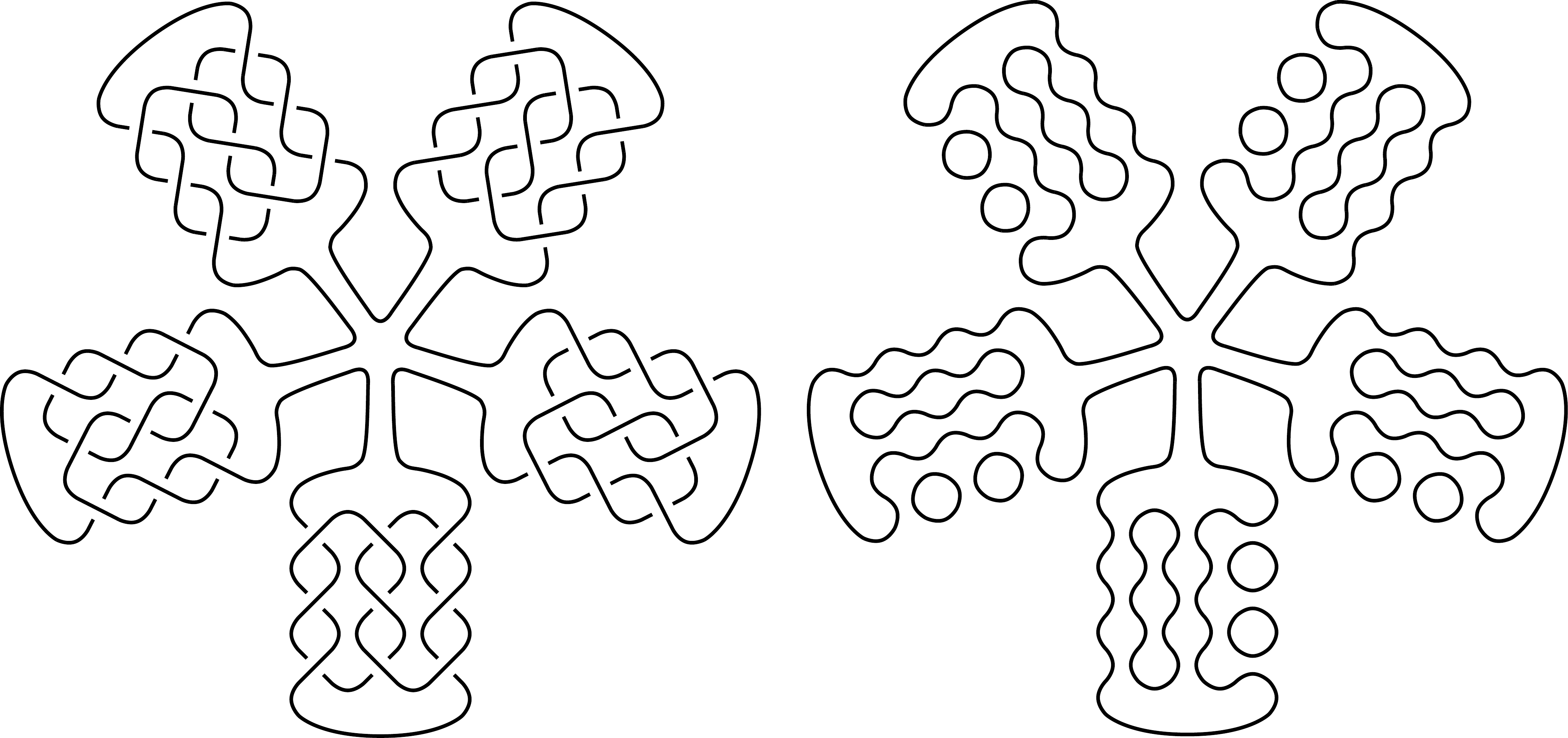}
		\caption{The windmill diagram (left) for $\#_5 (9_{46})$ and an enhanced state windmill (right) from the relative Khovanov-Jacobsson cycle associated to one of its slices.}
		\label{fig_windmill}
	\end{figure}
	
	\begin{proof}
		Use a \textit{windmill} diagram such as the one in Figure \ref{fig_windmill} \!, with $k$-many inward-facing copies of $9_{46}$ as the \textit{sails} of the windmill and $k$-many \textit{windshafts} which connect the sails in the center of the diagram. Movies for the $2^k$-many slices are obtained by choosing either of the movies from Theorem \ref{obpic_theorem3} for each sail and using $k$-many Morse saddles to merge their final frames along the windshafts. 
	
		By connecting the diagrams in this way, the relative Khovanov-Jacobsson cycle associated to one of these movies consists of $8^k$-many \textit{enhanced state windmills} whose sails are one of the $8$-many enhanced states from $\ckj_{\Sigma_\ell}$ or $\ckj_{\Sigma_r}$, depending on which movie was chosen for the corresponding sail in the diagram. The enhanced state windmills have this form because the final $k$-many Morse saddles forming the windshafts will connect the enhanced states from $\ckj_{\Sigma_\ell}$ and $\ckj_{\Sigma_r}$ along the ``top loop" in the smoothing; because all of these loops are $1$-labeled, the Morse saddles will preserve this labeling. In the aforementioned figure, we have illustrated an enhanced state windmill for the slice of $\#_k (9_{46})$ having movies of $\Sigma_r$ on all but one sail. 
		
		A more general way to describe these slices is as the boundary-connected sum of $k$-many 
		copies of $\Sigma_\ell$ or $\Sigma_r$. The utility of the (longwinded) process above is a specified movie description.
		
		We now prove the result. Choose a pair of slices $\Sigma_{0,1}$ from the $2^k$-many slices described above, and produce their relative Khovanov-Jacobsson cycles per the given movie descriptions. As in the proof of Theorem \ref{obpic_theorem3} \!, we will trim the diagrams to simplify the cycles, although in this case, we must trim each sail. Trim the left column of a sail when the movie of $\Sigma_0$ shows $\Sigma_r$ and trim the right column when it shows $\Sigma_\ell$. Because $\Sigma_0$ and $\Sigma_1$ differ in their choice of movies for the sails, one of the trims will kill $\ckj_{\Sigma_1}$. The only remaining element of $\ckj_{\Sigma_0}$ will be the all $1$-label of the $1$-state of the trimmed diagram of $\#_k(9_{46})$. For example, we have shown the trim of the enhanced state windmill from Figure 11 here. As before, such an element represents a nontrivial homology class in the Khovanov homology of this trimmed diagram. The result follows just as it did in the previous two results.
	\end{proof}
	
	\begin{corollary}
		The knot $\#_k \big(P(n,-n,n)\big)$ has at least $2^k$-many distinct slices.
	\end{corollary}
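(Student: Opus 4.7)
The plan is to combine the windmill construction from Corollary \ref{obpic_corollary5} with the trim argument from Corollary \ref{obpic_corollary4} in the obvious way. First I would fix an odd $n$ with $|n| > 1$ and form a windmill diagram analogous to Figure \ref{fig_windmill}, but with $k$-many inward-facing copies of the standard diagram of $P(n,-n,n)$ as the sails and $k$-many windshafts joining the sails at the center. Each sail admits two slice movies, namely the analogues of $\Sigma_\ell$ and $\Sigma_r$ whose existence is guaranteed by (the proof of) Corollary \ref{obpic_corollary4}. A choice of one of these two movies for each of the $k$ sails, followed by $k$-many Morse saddles along the windshafts to merge the final frames, produces $2^k$-many slices of $\#_k\bigl(P(n,-n,n)\bigr)$.

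Next I would compute the relative Khovanov-Jacobsson cycle associated to each such movie. Exactly as in the proof of Corollary \ref{obpic_corollary5}, the ``top loop'' along which each windshaft saddle is performed is $1$-labeled throughout the sail's cycle, so the windshaft saddles simply preserve the labels and tensor together the sail-cycles. Consequently the relative Khovanov-Jacobsson cycle of the chosen slice is a sum of enhanced state windmills, each of whose sails is one of the enhanced states appearing in $\ckj_{\Sigma_\ell}$ or $\ckj_{\Sigma_r}$ according to the chosen sail movie.

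To distinguish two such slices $\Sigma_0$ and $\Sigma_1$, I would pick any sail on which the two choices differ and use the trim strategy: on every sail, apply $n$-many positive trim maps to the left column of the sail when $\Sigma_0$ uses (the $P(n,-n,n)$-analogue of) $\Sigma_r$ and to the right column when it uses $\Sigma_\ell$. By the trim analysis already carried out in Corollary \ref{obpic_corollary4}, this kills every enhanced state in $\ckj_{\Sigma_1}$ contributed by at least one sail (the distinguishing one), so the image of $\ckj_{\Sigma_1}$ vanishes, while the image of $\ckj_{\Sigma_0}$ reduces to a single enhanced state windmill, namely the all $1$-label on the $1$-state of the trimmed windmill diagram. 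The image of the difference $\mathcal{C}(\Sigma_0,\Sigma_1)$ is therefore $\pm$ this all $1$-label, which again represents a nontrivial Khovanov homology class by the extreme-grading argument cited from \cite{elliott09_1} and \cite{swann10}. Hence $\ckj_{\Sigma_0}$ and $\ckj_{\Sigma_1}$ represent distinct Khovanov homology classes, so the two slices are not boundary-preserving isotopic.

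The only nontrivial point, and the main obstacle I would watch out for, is verifying that the ``top loop'' picture persists for general odd $n$: that is, that each sail-cycle has its distinguishing smoothing with a $1$-labeled top loop, so that the windshaft saddles act on the sails independently and the whole trimming argument reduces cleanly to $k$ parallel copies of the single-sail argument. This is exactly the structural feature established in the proof of Corollary \ref{obpic_corollary4} (where the all $0$-smoothed column produces the unique surviving enhanced state in the extreme homological grading), so once this observation is made explicit, the conclusion follows verbatim from the proofs of Corollaries \ref{obpic_corollary4} and \ref{obpic_corollary5}.
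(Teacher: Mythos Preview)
Your proposal is correct and follows essentially the same approach as the paper: combine the windmill diagram of Corollary \ref{obpic_corollary5} with the $n$-crossing trim argument of Corollary \ref{obpic_corollary4}. The paper's own proof is a one-line observation that the number of crossings in each strand of a sail does not affect the argument of Corollary \ref{obpic_corollary5}, so your write-up is simply a more detailed version of the same idea.
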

	\begin{proof}
		Similar to Corollary \ref{obpic_corollary4} \!, the number of crossing used in the strands for each sail of the windmill does not affect the argument in Corollary \ref{obpic_corollary5} \!.
	\end{proof}
	
\end{section}

%
%


\bibliographystyle{alpha}
\bibliography{mybib}

\end{document}